\newtheorem{defin}{}
\newtheorem{saetze}[defin]{}
\newtheorem{conjec}[defin]{}
\newtheorem{lemmas}[defin]{}
\newtheorem{folger}[defin]{}
\newtheorem{bemerk}[defin]{}
\newenvironment{defi}     {\begin{defin}\it {\bf Definition:}}{\end{defin}}
\newenvironment{theorem}  {\begin{saetze}\it {\bf Theorem:}}{\end{saetze}}
\newenvironment{lemma}    {\begin{lemmas}\it {\bf Lemma:}}{\end{lemmas}}
\newenvironment{remark}   {\begin{bemerk}\it {\bf Remark:}}{\end{bemerk}}
\newenvironment{proof}    {{\it Proof}:}{{\hfill \fillbox \bigskip}}
\newcommand{\fillbox}{\mbox{$\bullet$}}
\newcommand{\ra}{\rightarrow}
\newcommand{\ms}{\mapsto}
\newcommand{\ol}{\overline}
\newcommand{\split}{\rtimes}
\newcommand{\N}{\mathbb N}
\newcommand{\Z}{\mathbb Z}
\newcommand{\G}{\mathcal G}
\newcommand{\T}{\mathcal T}
\newcommand{\W}{\mathcal W}
\renewcommand{\P}{\mathcal P}
\renewcommand{\v}{\ol{v}}
\newcommand{\w}{\ol{w}}
\renewcommand{\o}{\ol{o}}
\newcolumntype{C}[1]{>{\centering $}p{#1}<{$}} 
\newcolumntype{L}[1]{>{$}p{#1}<{$}} 
\newcolumntype{R}[1]{>{\raggedleft\arraybackslash $}p{#1}<{$}}
\newenvironment{items}{\begin{list}{$\alph{item})$}
{\labelwidth18pt \leftmargin18pt \topsep3pt \itemsep1pt \parsep0pt}}
{\end{list}}
\newcommand{\bulit}{\item[$\bullet$]}
\begin{document}

\title{Infinite sequences of $p$-groups with fixed coclass}
\author{Bettina Eick and D\"orte Feichtenschlager}
\date{\today}
\maketitle

\begin{abstract}
Eick \& Leedham-Green sketched a construction for infinite sequences
of finite $p$-groups with fixed coclass. These infinite sequences have
turned out to be very useful in the theory of finite $p$-groups. We 
exhibit a detailed description for the construction of the finite 
sequences and we determine presentations for the infinite sequences for 
the primes and coclasses $(2,1)$, $(2,2)$ and $(3,1)$.
\end{abstract}

\section{Introduction}

Coclass theory has been initiated by Leedham-Green and Newman \cite{LNe80}:
they suggested to classify and investigate finite $p$-groups using their 
coclass as primary invariant. Recall that the coclass of a group of order
$p^n$ and class $c$ is defined as $n-c$. The suggestion to classify $p$-groups
by coclass has led to a rich and interesting research project. The book by 
Leedham-Green and McKay \cite{LGM02} describes its state of the art up until 
2002.

The graphs $\G(p,r)$ visualize the finite $p$-groups of coclass $r$ and they
play a central role in coclass theory. A recent discovery is the existence of 
periodic patterns in the graphs. These have been conjectured by Newman 
\& O'Brien \cite{NOB99} and a first proof has been given by du Sautoy 
\cite{DuS01}. Eick \& Leedham-Green \cite{ELG08} exhibited a new proof which
additionally yields a group theoretic construction for infinite sequences of 
finite $p$-groups with coclass $r$ associated to the periodic patterns in 
$\G(p,r)$.

The infinite sequences have proved to be useful in the theory of finite 
$p$-groups. For example, \cite{Eic06b} determines a formula for the orders
of the automorphism groups of the groups in an infinite sequences of 
$2$-groups. The result implies that among the finite $2$-groups of coclass 
$r$ there are at most finitely many counterexamples to the long-standing 
divisibility conjecture: `if $G$ is a non-abelian finite $p$-group, then 
$|G|$ divides $|Aut(G)|$'. Further, \cite{Eic07b, EFe10b} investigate the 
Schur multiplicators of the groups in an infinite sequence. \cite{EFe10b} 
shows that these Schur multiplicators can be described by a single 
parametrised presentation. It is conjectured that this type of behaviour 
extends to homology groups and cohomology rings in general; See also 
\cite{DEF08} for some details.
 
The explicit definition of the infinite sequences is a side-result in 
\cite{ELG08} and it is not easily extracted from that paper. Further, in 
many of the applications of the infinite sequences minor variations on 
their construction are needed. It is the aim of this note to provide a 
detailed description for the construction of the infinite sequences of
finite $p$-groups of coclass $r$ so that a range of minor variations
is available as well. As example, we construct the infinite sequences 
for the graphs $\G(2,1)$, $\G(3,1)$ and $\G(2,2)$.

\section{Preliminaries}

In this section we summarize some of the well known major features of
coclass theory. For background and proofs we refer to the book by 
Leedham-Green and McKay \cite{LGM02}.
Throughout, for any group $G$ we denote with $G = \gamma_1(G) \geq
\gamma_2(G) \geq \ldots$ the lower central series of $G$ and we write
$G_{/i} = G / \gamma_i(G)$ for its quotients.

\subsection{Infinite pro-$p$-groups of finite coclass}

Let $S$ be a pro-$p$-group with finite commutator quotient. Then every
lower central series quotient $S_{/i}$ is a finite $p$-group. We use this
to define the coclass of $S$ as $lim_{i \ra \infty} cc(S_{/i})$. Thus if
$S$ is a pro-$p$-group of coclass $r$, then there exists an integer $u_1$ 
such that $cc(S_{/i}) = r$ and $[\gamma_i(S):\gamma_{i+1}(S)] = p$ holds for 
every $i \geq u_1$. 

A general structure theory for the infinite pro-$p$-groups of coclass $r$ 
is available. A deep result of this theory asserts that
every such group $S$ is solvable. More precisely, every such group $S$ has 
a torsion-free abelian normal subgroup of finite index. This implies that 
there exists an integer $u_2$ such that $\gamma_{u_2}(S)$ is torsion-free
abelian and hence a direct product of finitely many copies of the $p$-adic 
integers $\Z_p$. The number of copies $d$ is an invariant for $S$, called 
its {\em dimension}. 

A fundamental theorem from coclass theory shows that there are only
finitely many isomorphism types of infinite pro-$p$-groups of coclass $r$. 
Further, every pair $S$ and $\ol{S}$ of non-isomorphic infinite 
pro-$p$-groups of coclass $r$ can be distinguished by finite quotients: 
there exists an integer $u_3$ such that $S_{/u_3} \not \cong \ol{S}_{/u_3}$
holds.

We define the {\em primary root} of an infinite pro-$p$-group $S$ of
coclass $r$ and dimension $d$ as the smallest integer $u$ such that the
quotient $S_{/u}$ is a finite $p$-group of coclass $r$, the subgroup 
$\gamma_u(S)$ is torsion-free abelian and $S_{/u}$ is not isomorphic
to $\ol{S}_{/u}$ for any infinite pro-$p$-group $\ol{S}$ of coclass $r$ 
with $\ol{S} \not \cong S$.

\subsection{Coclass graphs}

The {\em coclass graph} $\G(p,r)$ is a tool which is used to visualize 
the $p$-groups of coclass $r$. Its vertices correspond one-to-one to the 
isomorphism types of finite $p$-groups of coclass $r$. Two vertices $G$ 
and $H$ are connected by an edge (sometimes directed $G \ra H$) if 
$H/\gamma(H) \cong G$, where $\gamma(H)$ is the last non-trivial term of 
the lower central series of $H$. 

The {\em infinite paths} in $\G(p,r)$ correspond to the infinite 
pro-$p$-groups of coclass $r$. Every infinite path in $\G(p,r)$ defines an 
infinite pro-$p$-group of coclass $r$ via the inverse limit of the groups 
on the path. Conversely, every infinite pro-$p$-group of coclass $r$ 
yields an infinite path in $\G(p,r)$ via its lower central series 
quotients of coclass $r$. 

The {\em coclass tree} $\T(S)$ in $\G(p,r)$ associated with an infinite 
pro-$p$-group $S$ of coclass $r$ is the subtree of $\G(p,r)$ which contains 
all descendants of the quotient $S_{/u}$ where $u$ is the primary root of $S$.
The groups $S_{/u}, S_{/u+1}, \ldots$ define an infinite path through $\T(S)$.
The definition of primary root implies that this is the only infinite path 
in $\T(S)$ starting at the root of this tree.

The {\em depth} of a group $G$ in a coclass tree $\T(S)$ is its distance
to the infinite path in $\T(S)$. The {\em shaved} tree $\T_k(S)$ is the 
subtree of $\T(S)$ containing all groups of depth at most $k$. The {\em 
depth} of $\T(S)$ (or $\T_k(S)$) is the maximal depth of the groups in the
tree. The depth of $\T_k(S)$ is bounded by $k$, while the depth of $\T(S)$
can be finite or infinite. Note that the depth of every coclass tree
in $\G(2,r)$ and in $\G(3,1)$ is known to be finite, while every other
graph $\G(p,r)$ contains coclass trees of unbounded depth.

As there are only finitely many isomorphism types of infinite pro-$p$-groups 
of coclass $r$, it follows that $\G(p,r)$ contains only finitely many 
different coclass trees. Further, it is not difficult to show that there 
are only finite many groups in $\G(p,r)$ which are not contained in any
coclass tree. 

\section{Graph periodicity}
\label{period}

The periodicity of the shaved coclass trees is proved in \cite{DuS01} and
\cite{ELG08}. It asserts that for each integer $k$ and each infinite 
pro-$p$-group $S$ of finite coclass the shaved coclass tree $\T_k(S)$ 
is virtually periodic. For a more precise description, let $d$ be the 
dimension of $S$ and let $\T_{k,j}(S)$ be the subtree of all descendants 
of $S_{/j}$ in $\T_k(S)$. Then there exists a $j \in \N$ such that there
exists a graph isomorphism 
\[ \pi : \T_{k,j}(S) \ms \T_{k,j+d}(S).\]

This graph isomorphism allows to define infinite sequences of finite 
$p$-groups of coclass $r$ in $\T_k(S)$. Consider all the (finitely many) 
groups $G$ in $\T_{k,j}(S) \setminus \T_{k,j+d}(S)$. For each such group 
$G$, define an infinite sequence $(G_0, G_1, \ldots)$ via taking $G_0=G$ 
and $G_{i+1}$ as the image of $G_i$ under the graph isomorphism $\pi$.

This result asserts that $\T_k(S)$ consists of finitely many
infinite sequences (corresponding one-to-one to the groups in $\T_{k,j}(S)
\setminus \T_{k,j+d}(S)$) and the finitely many other groups (contained
in $\T(S) \setminus \T_{k,j}(S)$). However, so far the result is
practically useless in group theoretic applications, as it is based on a 
purely graph theoretic isomorphism. 

In the remainder of this paper we outline an alternative group theoretic 
definition for the infinite sequences in $\T_k(S)$. 

\section{A first overview}

Suppose we are given an infinite pro-$p$-group $S$ with coclass $r$, 
dimension $d$ and primary root $u$ and an integer $k$. This section provides
a first overview on the construction of infinite sequences in the shaved 
coclass tree $\T_k(S)$. Following the ideas of \cite{ELG08}, we proceed in 
two steps.

First, we choose a set $\P = \P(S,k)$ consisting of certain tuples of 
integers of the form $(l,e)$ with $l = l(S,k)$ and $e = e(S,l)$. A necessary 
condition for all tuples is that $l \geq u$ holds. The entries $l$ are 
called {\em secondary roots} and the entries $e$ are called {\em offsets}.

Then, we consider each pair $(l,e)$ in $\P$ in turn and determine all 
infinite sequences in $\T_k(S)$ associated to it. For this purpose, let 
$A_i = A_i(l,e) := \gamma_l(S)/ \gamma_{l+e+id}(S)$ and note that $A_i$ is 
finite abelian of order $p^{e+id}$, as $l \geq u$. The natural conjugation 
action of $S$ on $A_i$ allows to consider $A_i$ as an
$S_{/l}$-module. We now construct infinite 
sequences of groups $(G_i \mid i \in \N_0)$ using extension theory: the 
group $G_i$ is an extension of $A_i$ by $S_{/l}$ via a certain special cocycle 
$\delta_i$. 

The following questions remain to be discussed.
\medskip

{\bf Questions:}{\it 
\begin{items}
\item[\rm (1)]
How do we choose a suitable set $\P$?
\item[\rm (2)]
How do we choose the cocycles $\delta_i$ for the extensions?
\item[\rm (3)]
Why does that yield a complete set of infinite sequences and is
that set irredundant?
\end{items}}
\medskip

Before we discuss these questions in the subsequent sections of this paper, 
we exhibit some elementary comments on the construction. 

\begin{remark}
\label{orders}
The orders of the groups in an infinite sequence $(G_i \mid i \in \N_0)$
associated with the infinite pro-$p$-group $S$ and the pair $(l,e)$ can 
be read off as
\[ |G_0| = |S_{/l}| p^e = p^{r+1+l+e} \;\; \mbox{ and } \;\;
   |G_{i+1}| = p^d |G_i| \;\; \mbox{ for } \;\; i \geq 0. \]
\end{remark}

Further, it is not difficult to exhibit not only the order, but also the 
isomorphism 
types of the groups $A_i$. The following lemma shows that these are always 
nearly homocyclic and they are homocyclic if and only if the offset $e$ is 
divisible by the dimension $d$. 

\begin{lemma}
\label{isomtype}
Let $S$ be an infinite pro-$p$-group of coclass $r$, dimension $d$ and
primary root $u$. Let $l \geq u$ and write $e = q d + t$ for some $q,t
\in \N_0$ with $0 \leq t < d$. Then the group $A_i = \gamma_l(S)/ 
\gamma_{l+e+id}(S)$ has the abelian invariants $(a_1, \ldots, a_t, a_{t+1}, 
\ldots, a_d)$ with $a_j = p^{i+q+1}$ for $1 \leq j \leq t$ and $a_j = p^{i+q}$ 
for $t < j \leq d$.
\end{lemma}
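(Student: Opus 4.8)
The plan is to reduce the statement to a computation of abelian invariants for a quotient of a free $\Z_p$-module. Since $l \geq u$ and $\gamma_u(S)$ is torsion-free abelian by the definition of the primary root, the subgroup $\gamma_l(S) \leq \gamma_u(S)$ is torsion-free abelian as well; as it has finite index in a copy of $\Z_p^d$, we get $M := \gamma_l(S) \cong \Z_p^d$ as a $\Z_p$-module. The one structural fact I would draw from the coclass structure theory is that the action of $S$ on its lower central factors is uniserial with period $d$, which translates into the identity
\[ \gamma_{n+d}(S) = p\,\gamma_n(S) \quad \mbox{for all } n \geq u. \]
Iterating this (which is legitimate since $l + jd \geq u$ for every $j \geq 0$) yields $\gamma_{l+md}(S) = p^m M$ for all $m \geq 0$. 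This is the only nontrivial input; everything else is elementary.

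Next I would write $e + id = md + t$ with $m := q + i$, so that $md \leq e + id \leq (m+1)d$ because $0 \leq t < d$. Set $K := \gamma_{l+e+id}(S)$ and $Q := A_i = M/K$. Since the lower central series is descending, the iterated identity sandwiches $K$ as
\[ p^{m+1} M = \gamma_{l+(m+1)d}(S) \subseteq K \subseteq \gamma_{l+md}(S) = p^m M. \]
The left inclusion gives $p^{m+1} Q = 0$, so every invariant factor of $Q$ is at most $p^{m+1}$. The right inclusion gives $K \subseteq p^m M$, whence $Q / p^m Q = M/(p^m M + K) = M/p^m M \cong (\Z/p^m\Z)^d$; since $Q$ is a quotient of $M$ and hence generated by at most $d$ elements, this forces $Q$ to have exactly $d$ invariant factors, each at least $p^m$. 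Combining the two bounds, every invariant factor of $Q$ lies in $\{p^m, p^{m+1}\}$.

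Finally I would count how many invariant factors equal $p^{m+1}$. Writing $Q \cong (\Z/p^m\Z)^{d-s} \oplus (\Z/p^{m+1}\Z)^{s}$, the order of $Q$ is $p^{md+s}$. On the other hand $|A_i| = p^{e+id} = p^{md+t}$, as already recorded for the orders of the $A_i$, so comparing exponents gives $s = t$. Since $m = q + i$, this is exactly the claimed list: $t$ invariants equal to $p^{i+q+1}$ and $d-t$ invariants equal to $p^{i+q}$.

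The only delicate point is the periodicity identity $\gamma_{n+d}(S) = p\,\gamma_n(S)$: it encodes the uniserial action of $S$ on the torsion-free lattice $\gamma_u(S)$, reflecting that the associated uniformizer $\lambda$ satisfies $\lambda^d = p \cdot (\mbox{unit})$ (for instance $\lambda = \zeta - 1$ with $\zeta$ a suitable root of unity in the maximal-class case). I would cite this from the structure theory in \cite{LGM02} rather than reprove it; once it is available, the remaining argument is the routine sandwich-and-count above, and the main work is merely bookkeeping of the exponents.
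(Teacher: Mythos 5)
Your proof is correct and follows essentially the same route as the paper: both hinge on the single identity $\gamma_{l+jd}(S)=\gamma_l(S)^{p^j}$ coming from the uniserial action (the paper derives it from the index-$p$ condition on the lower central factors, you cite it from \cite{LGM02}), and then read off the invariants. The only difference is that you spell out the sandwich-and-count bookkeeping that the paper compresses into ``this yields the desired result.''
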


\begin{proof}
The definition of primary root implies that $[\gamma_j(S):\gamma_{j+1}(S)]
= p$ for all $j \geq l$. This yields that the groups $\gamma_j(S)$ are the 
only $S$-normal subgroups in $\gamma_l(S)$. As $\gamma_l(S) \cong \Z_p^d$,
it follows that $\gamma_l(S)^{p^j} = \gamma_{l+jd}(S)$. This yields the
desired result.
\end{proof}

The choice of the cocycles will induce that all groups in an infinite 
sequence $(G_i \mid i \in \N_0)$ have the same depth in $\T_k(S)$. The
following picture sketches their layout.

\begin{figure}[htbp]
\begin{center}
\input{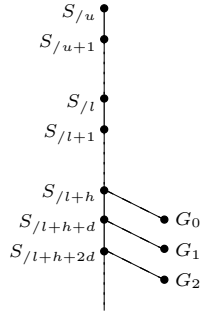}
\caption{Sketch of infinite sequence in $\T_k(S)$ with depth $e-h \le k$.}
\end{center}
\end{figure}

\section{Choosing a set $\P(S,k)$}
\label{secpairs}

In this section we assume that we are given an infinite pro-$p$-group $S$
of coclass $r$ and an integer $k$. Let $d$ be the dimension of $S$ and let 
$u$ be its primary root. Our aim is to determine a set $\P(S,k)$ such 
that every infinite sequence in $\T_k(S)$ can be constructed from exactly 
one entry in this set. 

\begin{defi}
An integer $l$ is a {\em secondary root} for $S$ and $k$ if $l \geq u$ holds
and, secondly, if every descendant $G$ of $S_{/l}$ in $\T_k(S)$ is isomorphic 
to an extension of $\gamma_l(S)/ \gamma_{cl(G)+1}(S)$ by $S_{/l}$. 
\end{defi}

If a group $G$ is a descendant of $S_{/l}$ in $\T_k(S)$, then $G_{/l} 
\cong S_{/l}$ follows. This allows to consider $\gamma_l(G)$ as module
for $S_{/l}$ via the natural conjugation action of $G_{/l}$. The second 
condition in Definition \ref{secroot} is thus equivalent to the following: 
every group $G$ in $\T_{k,l}(S)$ satisfies that $\gamma_l(G)$ is isomorphic 
as $S_{/l}$-module to $\gamma_l(S)/ \gamma_{cl(G)+1}(S)$.

It is not obvious that a secondary root always exists. This is a deep 
result of coclass theory and we refer to Chapters 5 and 11 of \cite{LGM02}
for background. We note that the results there also imply the following
remark.

\begin{remark}
Let $S$ be an infinite pro-$p$-group of finite coclass and let $k \in \N$.
If $l$ is a secondary root for $S$ and $k$, then every $g \geq l$ is also 
one.
\end{remark}

While a primary root $u$ is usually not difficult to determine, it is often 
not straightforward to find an integer which satisfies the second condition, 
as this requires detailed knowledge about almost all groups in the graph 
$\T_k(S)$. This problem is addressed in the following theorem which 
follows directly from Theorem 15 in \cite{ELG08}.

\begin{theorem}
\label{secroot}
Let $S$ be an infinite pro-$p$-group of coclass $r$ and dimension $d$.
Let $u$ be the primary root of $S$ and let $k \in \N$. Then every integer
$l$ with $l \ge u-1 + \max \{p^r, \frac{3}{2} (k+d)\}$ is a secondary root 
for $S$ and $k$.
\end{theorem}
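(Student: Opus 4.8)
The plan is to derive the statement from Theorem~15 of \cite{ELG08} and to verify that the stated lower bound on $l$ is exactly what that theorem requires. Since $u-1+\max\{p^r,\frac32(k+d)\}\ge u$, the first requirement $l\ge u$ in the definition of a secondary root holds automatically, so the entire task is the second requirement. Using the module-theoretic reformulation recorded just after the definition of a secondary root, it is enough to show the following: for every group $G$ in $\T_{k,l}(S)$ the $S_{/l}$-module $\gamma_l(G)$ is isomorphic to $\gamma_l(S)/\gamma_{cl(G)+1}(S)$.

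First I would fix such a $G$, write $c=cl(G)$, and record the two facts that make the candidate module canonical. On the one hand, because $G$ is a descendant of $S_{/l}$ we have $G_{/l}\cong S_{/l}$, so $\gamma_l(G)$ really is an $S_{/l}$-module, and an order count (using $|G|$ and $|S_{/l}|$) shows that $|\gamma_l(G)|=p^{\,c+1-l}$ agrees with $|\gamma_l(S)/\gamma_{c+1}(S)|$. On the other hand, by the uniserial structure used in the proof of Lemma~\ref{isomtype}, the quotients $\gamma_l(S)/\gamma_{l+m}(S)$ are the unique uniserial $S_{/l}$-modules of their respective orders, so $\gamma_l(S)/\gamma_{c+1}(S)$ is pinned down by its order among uniserial modules. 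Hence the claim is a \emph{rigidity} statement: for $l$ in the stated range, $\gamma_l(G)$ cannot be any of the other $S_{/l}$-modules of order $p^{\,c+1-l}$ that could a priori occur along a branch of depth at most $k$.

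This rigidity is precisely the content of Theorem~15 of \cite{ELG08}, whose hypothesis on the starting level is $l\ge u-1+\max\{p^r,\frac32(k+d)\}$; so the heart of the argument is to align that set-up with ours and read off the conclusion. It is worth keeping the roles of the two summands in mind during the alignment: $\frac32(k+d)$ provides enough uniserial layers below $S_{/l}$ to absorb a branch of depth up to $k$ together with the $d$ extra layers needed to witness a full period of the action, while $p^r$ dominates the length beyond which the possible module types have settled and only the uniserial one survives.

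The step I expect to be the genuine obstacle is not a computation but the faithful translation between the two languages. Theorem~15 of \cite{ELG08} is phrased in terms of the periodicity of shaved coclass trees rather than in terms of the single module $\gamma_l(G)$, so I would have to check that ``$G$ is an extension of $\gamma_l(S)/\gamma_{cl(G)+1}(S)$ by $S_{/l}$'' is indeed equivalent to the periodicity-type conclusion obtained there, and that the numerical constants match once the shift by $u-1$ is taken into account. With that dictionary established, the theorem follows directly, the only residual work being the order and uniseriality bookkeeping noted above.
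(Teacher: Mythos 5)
Your proposal takes essentially the same route as the paper: the paper offers no argument beyond stating that the theorem ``follows directly from Theorem 15 in \cite{ELG08}'', and your reduction to that theorem (after checking $l\ge u$ and reformulating the second condition module-theoretically) is exactly that citation, fleshed out with plausible bookkeeping. The translation step you flag as the remaining obstacle is precisely what the paper leaves implicit, so there is nothing further to reconcile.
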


Suppose now that a secondary root $l$ for $S$ and $k$ is given. To shorten
notation let $R = S_{/l}$ and $T = \gamma_l(S)$. Let $H$ be the kernel of the 
action of $R$ on $T$ with commutator quotient $H/H'$ and Schur multiplicator 
$M(H)$. Define the integers $a$ and $b$ by $p^a = exp(H/H')$ and $p^b = 
exp(M(H))$. 

\begin{defi}
An integer $e$ is an {\em offset} for $S$ and $l$ if
\[ e \geq \max \{ 2d(a+b+1), d(l+r-1)\}. \]
\end{defi}

The following theorem simplifies the determination of offsets. It follows 
directly from Theorem 29 in \cite{ELG08}.

\begin{theorem}
\label{offset}
Let $S$ be an infinite pro-$p$-group of coclass $r$ and let $l$ be a
secondary root for $S$. Then every $e \geq 2d(2l + 2r - 1)$ is an 
offset for $S$ and $l$.
\end{theorem}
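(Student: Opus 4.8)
The plan is to show directly that the explicit bound $e \ge 2d(2l+2r-1)$ forces $e \ge \max\{2d(a+b+1),\, d(l+r-1)\}$, which is exactly the defining condition for $e$ to be an offset. This splits into two numeric inequalities. The second, $2d(2l+2r-1) \ge d(l+r-1)$, is immediate: after cancelling $d$ it reads $4l+4r-2 \ge l+r-1$, i.e. $3(l+r) \ge 1$, which holds since $l \ge u \ge 1$ and $r \ge 1$. The first inequality, $2d(2l+2r-1) \ge 2d(a+b+1)$, cancels to $a+b \le 2(l+r-1)$, so everything reduces to bounding the two arithmetic invariants $a$ and $b$ attached to $H$.

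For this I would first record the order of $R = S_{/l}$. Since $l \ge u$, the group $S_{/l}$ is a finite $p$-group of coclass $r$, and because $[\gamma_i(S):\gamma_{i+1}(S)] = p$ for all $i \ge u$ its nilpotency class equals $l-1$; hence $|R| = p^{(l-1)+r} = p^{l+r-1}$. The subgroup $H$ is the kernel of the action of $R$ on $T$, so $H \trianglelefteq R$ and therefore $|H|$ divides $|R| = p^{l+r-1}$. Now I bound $a$ and $b$ crudely but sufficiently. Since $H/H'$ is a quotient of the finite $p$-group $H$, its exponent divides its order, so $p^a = \exp(H/H') \le |H| \le p^{l+r-1}$ and thus $a \le l+r-1$. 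For $b$ I would invoke the classical transfer (corestriction--restriction) argument showing that for any finite group the integer $|H|$ annihilates $H_n(H;\Z)$ for all $n \ge 1$; applied to $n=2$ this gives $\exp(M(H)) \mid |H|$, whence $p^b \le |H| \le p^{l+r-1}$ and $b \le l+r-1$.

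Combining the two bounds yields $a + b \le 2(l+r-1)$, hence $2d(a+b+1) \le 2d(2l+2r-1)$, which together with the trivial second inequality shows $e \ge \max\{2d(a+b+1),\, d(l+r-1)\}$ and completes the argument. The only non-formal ingredient is the Schur-multiplicator estimate $\exp(M(H)) \mid |H|$, so I expect that, together with pinning down the order $|S_{/l}| = p^{l+r-1}$, to be the single point needing care; everything else is the arithmetic comparison of the two explicit bounds. It is worth noting as a sanity check that the constant $2l+2r-1$ appears to be calibrated precisely so that the crude estimates $a, b \le \log_p|R| = l+r-1$ are exactly enough.
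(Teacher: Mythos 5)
Your argument is correct, but it is a genuinely different route from the paper's: the paper proves this statement purely by citation (``follows directly from Theorem 29 in \cite{ELG08}''), whereas you verify the defining inequality $e \ge \max\{2d(a+b+1),\,d(l+r-1)\}$ directly from first principles. The two nontrivial ingredients you use both check out: $|S_{/l}| = p^{l+r-1}$ follows because $S_{/l}$ has coclass $r$ and class exactly $l-1$ (note this is consistent with the exponent $r-1+x$ used in the completeness proof of Section \ref{complete}, though Remark \ref{orders} as printed has $r+1+l+e$ where $r-1+l+e$ is evidently meant, so do not be misled by that remark); and the corestriction--restriction argument does give $|H|\cdot H_n(H;\Z)=0$ for $n\ge 1$, hence $\exp(M(H))\mid |H|$. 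With $a,b\le l+r-1$ one gets $a+b+1\le 2l+2r-1$, and the comparison with $d(l+r-1)$ is trivial, exactly as you say; your closing observation that the constant $2l+2r-1$ is calibrated to the crude bounds $a,b\le\log_p|R|$ is almost certainly the computation hidden inside the cited theorem. What your approach buys is a self-contained, elementary proof within this paper; what the citation buys is brevity and consistency with \cite{ELG08}, on which the surrounding results (Theorems \ref{secroot} and \ref{isom}) already depend.
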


It remains to discuss how a complete and irredundant set of secondary 
roots and associated offsets can be obtained. 

\begin{defi}
\label{pairs}
We choose $\P(S,k) = \{(l_1,e_1), \ldots, (l_d, e_d)\}$ such that for 
$1\leq i \leq d$ the following conditions hold:
\begin{items}
\bulit $l_i$ is a secondary root for $S$ and $k$,
\bulit $e_i$ is an offset for $S$ and $l_i$, and
\bulit $l_i+e_i \equiv i \bmod d$ for $1 \leq i \leq d$.
\end{items}
\end{defi}

This provides a set of secondary roots and associated offsets. Its 
completeness is proved in Section \ref{complete} below. Its 
irredundancy can be observed easily, as the orders of the groups in 
an infinite sequence associated with $(l,e)$ depend on the values of 
$l+e \bmod d$ by Remark \ref{orders}. Hence every infinite sequence can 
be associated with at most one pair $(l,e)$ in $\P(S,k)$.

\section{Constructing infinite sequences}
\label{secinfseq}

In this section we assume that we have given an infinite pro-$p$-group 
$S$, an integer $k$, a secondary root $l$ and an offset $e$. We show how 
to construct all infinite sequences in $\T_k(S)$ associated with $(l,e)$.

Let $T = \gamma_l(S)$ and $R = S/T$. The choice for $l$ implies that $R$ 
is a finite $p$-group of coclass $r$ and $T$ is isomorphic to $\Z_p^d$.
Denote $T_i = \gamma_{l+i}(S)$ and $A_i = T/T_{e+id}$ for $i \in \N$. The 
conjugation action of $S$ on $T$ induces that $A_i$ is an $R$-module.

The groups in an infinite sequence $\G = (G_i \mid i \in \N_0)$ are defined 
as extensions of $A_i$ by $R$ via certain cocycles $\delta_i \in H^2(R,A_i)$. 
We describe a construction for these cocycles below. We consider the groups 
on the infinite path of $\T_k(S)$ separately as a first step, since these 
are a particularly easy case.

\subsection{Sequences on the infinite path}
\label{infpath}

The projection $T \ra A_i$ induces a natural map $\nu_i : H^2(R,T) \ra 
H^2(R, A_i)$. Let $\alpha \in H^2(R,T)$ be an arbitrary, but fixed element 
which defines the infinite pro-$p$-group $S$ as extension of $T$ by $R$ and 
let $\alpha_i$ denote its image under the projection map. Then $\alpha_i$ 
defines $S_{/l+e+id}$ as extension of $A_i$ by $R$.

\begin{defi}
The infinite sequence defined by $0$ with respect to $S$ and $(l, e)$ 
is the sequence of groups $(G_i \mid i \in \N_0)$ with $G_i = S_{/l+e+id}$. 
\end{defi}

Note that this is an infinite sequence of groups on the infinite path of
$\T_k(S)$. There are $d$ different infinite sequences of groups on the
infinite path. These arise with the construction described here by
varying the values for $(l,e)$.

\subsection{Arbitrary infinite sequences}

We now consider the construction of arbitrary infinite sequences with 
secondary root $l$ and offset $e$. This mainly requires the definition
of suitable cocycles in $H^2(R, A_i)$. We first investigate the structure
of this cohomology group. As above, let $H$ be the kernel of the action 
of $R$ on $T$ and let $p^a = exp(H/H')$. For $i \in \N_0$ define $B_i = 
T_{d(a+1)+id}/T_{e+id}$ and note that $B_i$ is an $R$-invariant subgroup 
of $A_i$. Further, note that Lemma \ref{isomtype} implies that $A_0 
\cong A_i^{p^i}$ so that we can identify $A_0$ with an $R$-invariant
subgroup of $A_i$. We define
\begin{eqnarray*}
\pi_i &:& H^2(R, A_0) \ra H^2(R, A_i) 
            \mbox{ induced by inclusion } A_0 \cong A_i^{p^i} \ra A_i, \\
\mu_i &:& H^2(R, B_i) \ra H^2(R, A_i) 
            \mbox{ induced by inclusion } B_i \ra A_i, \\ 
\nu_i &:& H^2(R, T) \ra H^2(R, A_i) 
            \mbox{ induced by projection } T \ra A_i. 
\end{eqnarray*}

Denote with $N_i$ and $M_i$ the images of $\nu_i$ and $\mu_i$, respectively.

\begin{theorem}
\label{isom}
Let $i \in \N_0$.
\begin{items}
\item[\rm a)]
$H^2(R, A_i) = N_i \oplus M_i$.
\item[\rm b)]
$N_i \cong H^2(R,T)$ and $M_i \cong H^3(R, T_e)$.
\item[\rm c)]
$\pi_i$ restricts to an isomorphism from $M_0$ onto $M_i$.
\end{items}
\end{theorem}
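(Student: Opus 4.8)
The plan is to reduce the whole statement to the cohomology of the single $R$-lattice $T=\gamma_l(S)\cong\Z_p^d$, exploiting the power maps $\gamma_l(S)^{p^j}=\gamma_{l+jd}(S)$ from the proof of Lemma \ref{isomtype}. These identify each inclusion $T_{jd}\ra T$ with multiplication by $p^j$ on $T$, an $R$-module isomorphism onto its image, so that $H^n(R,T_{jd})\cong H^n(R,T)$ and all transition maps between such groups become multiplication by powers of $p$. Writing $e=qd+t$ with $0\le t<d$ (as in Lemma \ref{isomtype}), I would work with the two short exact sequences of $R$-modules
\[ 0\ra T_{e+id}\ra T\ra A_i\ra 0, \qquad 0\ra B_i\ra A_i\ra D_i\ra 0, \]
where $D_i=A_i/B_i\cong T/T_{d(a+1)+id}$, and analyse their long exact cohomology sequences. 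Throughout, the offset condition $e\ge\max\{2d(a+b+1),\,d(l+r-1)\}$ supplies the numerical inequalities that force the relevant transition maps to vanish.

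First I would treat $\nu_i$. In the long exact sequence of the first short exact sequence, $\nu_i$ is exactly the map $H^2(R,T)\ra H^2(R,A_i)$; the preceding map factors through multiplication by $p^{q+i}$ on $H^2(R,T)$, and the connecting map lands in $H^3(R,T_{e+id})\cong H^3(R,T_e)$. Provided $p^{q+i}$ annihilates $H^2(R,T)$ and $H^3(R,T)$, this collapses to the short exact sequence
\[ 0\ra H^2(R,T)\stackrel{\nu_i}{\ra} H^2(R,A_i)\stackrel{\beta}{\ra} H^3(R,T_e)\ra 0, \qquad(\ast) \]
which already gives $N_i\cong H^2(R,T)$. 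For $\mu_i$ I would observe that the inclusion $B_i\ra A_i$ is covered by the inclusion $T_{d(a+1)+id}\ra T$ of the ambient kernels, producing a morphism of short exact sequences whose connecting homomorphisms commute; hence $\beta\circ\mu_i$ equals the connecting map $\partial$ of $0\ra T_{e+id}\ra T_{d(a+1)+id}\ra B_i\ra 0$. Since $B_i\cong T/T_{(q-a-1)d+t}$, this $\partial$ is onto $H^3(R,T_{e+id})\cong H^3(R,T_e)$ as soon as $p^{q-a-1}$ annihilates $H^3(R,T)$; thus $\beta(M_i)=H^3(R,T_e)$, so by $(\ast)$ we get $N_i+M_i=H^2(R,A_i)$ and $M_i\cong H^3(R,T_e)$. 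For independence I would use the second sequence: its map $\psi_i:H^2(R,A_i)\ra H^2(R,D_i)$ has kernel $M_i$, while $\psi_i\circ\nu_i$ is the projection-induced map $H^2(R,T)\ra H^2(R,D_i)$, which is injective once $p^{a+1}$ annihilates $H^2(R,T)$. Then $\psi_i$ is injective on $N_i$, so $N_i\cap M_i=0$ and $H^2(R,A_i)=N_i\oplus M_i$. This establishes (a) and (b) modulo three annihilation statements: that $p^{q+i}$ kills $H^2(R,T)$ and $H^3(R,T)$, that $p^{a+1}$ kills $H^2(R,T)$, and that $p^{q-a-1}$ kills $H^3(R,T)$.

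Proving these bounds, and checking that the offset condition forces them, is where I expect the main obstacle to lie. The natural tool is the Lyndon--Hochschild--Serre spectral sequence for $H\trianglelefteq R$. Since $H$ acts trivially on $T\cong\Z_p^d$, one has $H^1(H,T)=0$, while $H^2(H,T)\cong(H/H')^d$ has exponent $p^a$ and $H^3(H,T)\cong M(H)^d$ has exponent $p^b$ (via $H^n(H,\Z_p)\cong H^{n-1}(H,\Q_p/\Z_p)$ and universal coefficients); this is exactly where the group-theoretic invariants $a$ and $b$ enter. The delicate point is to control the base terms $H^n(R/H,T)$, where $R/H$ is the faithful image of $R$ in $\mathrm{GL}_d(\Z_p)$, using the structure of $T$ as an $R/H$-lattice from coclass theory. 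The two summands of the offset bound are calibrated precisely for this: the term $2d(a+b+1)$ governs the fibre contributions of exponents $p^a,p^b$ (and matches the cut-off $d(a+1)$ chosen for $B_i$), while the term $d(l+r-1)$ absorbs the base contributions, whose exponents are bounded in terms of $|R|=p^{l+r+1}$. Making this coupling precise is the real work and is where the deep input behind Theorems \ref{secroot} and \ref{offset} is genuinely used.

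Finally, for (c) I would argue by naturality. The identification $A_0\cong A_i^{p^i}$ (multiplication by $p^i$) carries $B_0=T_{d(a+1)}/T_e$ isomorphically onto $B_i=T_{d(a+1)+id}/T_{e+id}$ and fits into a commutative square relating the inclusions $B_0\ra A_0$ and $B_i\ra A_i$. Applying $H^2(R,-)$ yields a commutative square relating $\mu_0$, $\mu_i$ and $\pi_i$ in which the induced map $H^2(R,B_0)\ra H^2(R,B_i)$ is an isomorphism; hence $\pi_i(M_0)=\mathrm{im}\,\mu_i=M_i$. Since part (b) gives $|M_0|=|H^3(R,T_e)|=|M_i|$, the surjection $\pi_i|_{M_0}:M_0\ra M_i$ between finite groups of equal order is an isomorphism, which proves (c).
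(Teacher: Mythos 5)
Your reduction is structurally sound and, in spirit, reconstructs what actually happens in the source the paper relies on; but as it stands it has a genuine gap, and you have located it yourself. The paper's own ``proof'' of Theorem \ref{isom} is a pure citation: parts a) and b) are Theorem 18 of \cite{ELG08} combined with the definition of offset, and part c) is Theorem 19 of \cite{ELG08}. Your two long exact sequences, the identification of $\beta\circ\mu_i$ with the connecting map of $0\ra T_{e+id}\ra T_{d(a+1)+id}\ra B_i\ra 0$ via naturality, and the finite-counting argument for c) are all correct, and the reduction of a) and b) to the three annihilation statements is a legitimate and rather more informative presentation than the paper gives. The first of the three (that $p^{q+i}$ kills $H^2(R,T)$ and $H^3(R,T)$) is essentially elementary, since $\exp H^n(R,M)$ divides $|R|$ and the offset condition $e\ge d(l+r-1)$ makes $p^q$ comparable to $|R|$ (up to a small constant one would need to track).

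The gap is the remaining two statements: that $p^{a+1}$ annihilates $H^2(R,T)$ and that $p^{q-a-1}$ (with $q-a-1\ge a+2b+1$ from the offset bound $e\ge 2d(a+b+1)$) annihilates $H^3(R,T)$. Your spectral sequence set-up correctly handles the fibre terms --- $H^1(H,T)=0$, $\exp H^2(H,T)=p^a$, $\exp H^3(H,T)=p^b$ --- but the base terms $H^2(R/H,T)$ and $H^3(R/H,T)$ are not controlled by $a$ and $b$ at all; a priori their exponents are only bounded by $|R/H|$, which would force the much weaker bound already covered by the first annihilation statement and would make the $2d(a+b+1)$ clause of the offset definition, and your choice of cut-off $d(a+1)$ for $B_i$, pointless. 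What is needed is the fact that for the faithful uniserial action of $R/H$ on the lattice $T$ coming from coclass theory, the groups $H^n(R/H,T)$ have very small (essentially bounded by $p$) exponent. That statement is the actual mathematical content of Theorems 18 and 19 of \cite{ELG08}, and your proposal asserts the resulting annihilation bounds rather than proving them. So the proof is incomplete exactly where the paper outsources the work; to close it you would either have to cite those theorems (as the paper does) or prove the exponent bounds for $H^n(R/H,T)$ directly.
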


\begin{proof}
Theorem 18 of \cite{ELG08} and the definition of offsets yields that
$H^2(R, A_i) \cong H^2(R,T) \oplus H^3(R, T_{e+id})$. Powering by $p^i$ 
induces an isomorphism $T_e \cong T_{e+id}$ and thus an isomorphism 
$H^3(R,T_e) \cong H^3(R, T_{e+id})$. The proof of Theorem 18 in 
\cite{ELG08} implies that the direct component $H^2(R,T)$ corresponds
to the subgroup $N_i$ of $H^2(R,A_i)$. Theorem 19 in \cite{ELG08} shows
that the direct component $H^3(R, T_{e+id})$ corresponds to the subgroup
$M_i$ of $H^2(R, A_i)$ and that $\pi_i$ is an isomorphism from $M_0$
onto $M_i$. 
\end{proof}

Recall that $\alpha \in H^2(R,T)$ defines $S$ as extension of $T$ by $R$
and that $\alpha_i$ is the image of $\alpha$ under $\nu_i$. We often 
identify $H^3(R,T_e)$ with $M_0$ in the following using the isomorphism
obtained in Theorem \ref{isom} b).

\begin{defi}
The infinite sequence defined by $\beta \in H^3(R,T_e) \cong M_0$ with 
respect to $S$ and $(l,e)$ is the sequence of groups $(G_i \mid i \in 
\N_0)$ where $G_i$ is the extension of $A_i$ by $R$ via the cocycle class
$\delta_i = \alpha_i + \pi_i(\beta)$.
\end{defi}

As $R$ is a finite group, it follows that $M_0 \cong H^3(R,T)$ is a finite 
group and thus this construction yields finitely many infinite sequences. We
note that different cocycle classes $\beta$ and $\beta'$ can yield
infinite sequences whose groups are pairwise isomorphic. We will not
investigate this problem further and instead refer to \cite{ELG08} for
a full solution of the isomorphism problem. We conclude this section with
the following remark.

\begin{remark}
\label{sum}
$\G_\beta$ is the infinite sequences on the infinite path if and only 
if $\beta = 0$.
\end{remark}

\section{Completeness and redundancy}
\label{complete}

In the first part of this section, we show that the construction introduced 
above yields a complete set of infinite sequences in a shaved coclass tree 
$\T_k(S)$.

\begin{theorem}
Let $S$ be an infinite pro-$p$-group of coclass $r$ and $k$ an integer.
Let $\P(S,k)$ be a set of pairs satisfying the conditions of Definition
\ref{pairs}. Then almost all groups in $\T_k(S)$ are contained in an 
infinite sequence.
\end{theorem}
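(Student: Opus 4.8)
The plan is to show that all but the finitely many groups of small class in $\T_k(S)$ occur in one of the infinite sequences $\G_\beta$ attached to a pair in $\P(S,k)$; the remaining groups of small class are then the only exceptions, which is exactly what ``almost all'' asserts. So fix a group $G$ in $\T_k(S)$ whose class $c = cl(G)$ is large. Since the $d$ pairs in $\P(S,k)$ realise all residues modulo $d$ (by the last condition of Definition \ref{pairs}), there is exactly one pair $(l,e) \in \P(S,k)$ with $l+e \equiv c+1 \pmod d$; for $c$ large this forces $c+1-l = e+id$ with $i \in \N_0$ and $c \ge l$, so $G$ is a descendant of $S_{/l}$ in $\T_k(S)$. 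By the secondary root property $G$ is then an extension of $\gamma_l(S)/\gamma_{c+1}(S) = A_i$ by $R = S_{/l}$, say with cocycle class $\delta \in H^2(R,A_i)$; moreover, since $A_i$ is uniserial as an $R$-module (by the argument proving Lemma \ref{isomtype}), the lower central series of $G$ inside $\gamma_l(G)$ is carried onto the filtration $(T_j/T_{e+id})_j$ of $A_i$. The goal is to show $\delta = \alpha_i + \pi_i(\beta)$ for a suitable $\beta \in M_0$.

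Next I would apply Theorem \ref{isom} to write $\delta = \delta_N + \delta_M$ with $\delta_N \in N_i$ and $\delta_M \in M_i$. As $\pi_i$ restricts to an isomorphism $M_0 \to M_i$, I may simply set $\beta = \pi_i^{-1}(\delta_M)$, so that the equation $\delta_M = \pi_i(\beta)$ carries no further constraint. Everything thus reduces to the single identity $\delta_N = \alpha_i$, that is, to showing that the $N_i$-component of any group in $\T_k(S)$ coincides with that of the group on the infinite path.

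To establish this I would pass to the top quotient $\ol{G} = G/\gamma_{l+d(a+1)+id}(G)$, obtained by factoring out the copy of $B_i = T_{d(a+1)+id}/T_{e+id}$ inside $A_i$; by the filtration identification it is an extension of $A_i/B_i = T/T_{d(a+1)+id}$ by $R$. Its distance to $G$ equals $e - d(a+1)$, and the defining inequalities for offsets together with the size of $l$ forced by Theorem \ref{secroot} guarantee $e - d(a+1) \ge k$. Since $G$ has depth at most $k$, the ancestor $\ol{G}$ therefore lies at or beyond the point where $G$ meets the infinite path of $\T(S)$, and as each class occurs only once on that path we get $\ol{G} = S_{/l+d(a+1)+id}$. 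Consequently $\delta$ and $\alpha_i$ have the same image under the map $H^2(R,A_i) \to H^2(R,A_i/B_i)$ induced by $A_i \to A_i/B_i$. This map annihilates $M_i$ (which is supported on $B_i$), so $\delta_N - \alpha_i$ lies in $N_i \cap \ker\bigl(H^2(R,A_i) \to H^2(R,A_i/B_i)\bigr)$; granting that this intersection is trivial yields $\delta_N = \alpha_i$, whence $\delta = \alpha_i + \pi_i(\beta)$ and $G \cong G_i$ inside $\G_\beta$.

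I expect the main obstacle to be the end of the previous step, where two points need care. The first is upgrading the tree-level equality $\ol{G} = S_{/l+d(a+1)+id}$ to an equality of cohomology classes: this requires the $R$-module identification $\gamma_l(G) \cong A_i$ supplied by the secondary root property to be canonical enough that the class of $\ol{G}$ is genuinely the image of $\alpha_i$, and not merely equal to it up to an automorphism of $A_i/B_i$. The second is the injectivity assertion $N_i \cap \ker\bigl(H^2(R,A_i) \to H^2(R,A_i/B_i)\bigr) = 0$. Both are instances of the stable-range behaviour that produces the direct decomposition of Theorem \ref{isom}, so I would deduce them from Theorems 18 and 19 of \cite{ELG08} together with the defining inequalities for secondary roots and offsets, rather than reprove them here. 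Finally, the groups of class below $\max_i(l_i+e_i)$, for which no nonnegative index $i$ exists, are finite in number (each class level of $\T_k(S)$ carries only finitely many vertices) and constitute the only exceptions, giving the stated conclusion.
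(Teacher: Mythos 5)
Your skeleton (pick the unique pair $(l,e)\in\P(S,k)$ matching the order modulo $d$, use the secondary-root property to write $G$ as an extension of $A_i$ by $R$, split the class via Theorem \ref{isom}, absorb the $M_i$-component into $\beta$) agrees with the paper's proof up to the point where the $N_i$-component must be controlled, and there your argument has a genuine gap that you partly diagnose but cannot repair inside your framework. The identity $\delta_N=\alpha_i$ of cohomology classes is stronger than what is true: distinct classes can define isomorphic extensions (the paper itself notes that different $\beta$ may yield isomorphic sequences), and all your tree-level observation $\ol{G}\cong S_{/l+d(a+1)+id}$ can ever give is that the images of $\delta$ and $\alpha_i$ in $H^2(R,A_i/B_i)$ agree up to the action of compatible automorphism pairs --- exactly the ambiguity you flag as your ``first point needing care''. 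The paper does not try to remove this ambiguity. Instead, after arranging $n=x-l\ge 2e_0$ by restricting to descendants of $S_{/l_0+e_0}$ of order at least $p^{l_0+2e_0+r-1}$, it invokes Theorem 26 of \cite{ELG08}, which asserts both that $\delta$ defines $S$ as an extension of $T$ by $R$ and that $\delta$ is \emph{equivalent} to $\alpha$ under an automorphism preserving the isomorphism type of $G$; one then replaces $\delta$ by $\alpha$ w.l.o.g. That equivalence statement is the essential external input, and no amount of tree combinatorics substitutes for it.

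Two further steps would need repair even granting the above. First, the inequality $e-d(a+1)\ge k$ does not follow from the definitions: the offset condition $e\ge\max\{2d(a+b+1),d(l+r-1)\}$ makes no reference to $k$, and secondary roots need not be as large as Theorem \ref{secroot} demands (the paper uses $l=2$ for $\G(2,1)$), so your quotient $\ol{G}$ is not guaranteed to land on the infinite path. Second, the assertion $N_i\cap\ker\bigl(H^2(R,A_i)\to H^2(R,A_i/B_i)\bigr)=0$ is a stable-range statement about the index $d(a+1)+id$, which for $i=0$ lies strictly below the threshold $e\ge 2d(a+b+1)$ under which Theorems 18 and 19 of \cite{ELG08} are applied in the proof of Theorem \ref{isom}; it cannot simply be ``granted'' by appeal to those results as stated. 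Since the decisive step is outsourced to facts that are either false as literal identities or not covered by the cited theorems, the proposal as written does not close.
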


\begin{proof}
Our proof relies heavily on the results of \cite{ELG08}. Let $l_0 = \max
\{ l \mid (l,e) \in \P \}$ and $e_0 = \max \{e \mid (l,e) \in \P\}$. Let 
$G$ be a group in $\T_k(S)$ which is a descendant of $S_{/l_0+e_0}$ and 
has order at least $p^{l_0 + 2e_0 + r-1}$. Note that almost all groups in 
$\T_k(S)$ have this form. We show that $G$ is contained in an infinite 
sequence.

Let $|G| = p^{r-1+x}$ for some $x \geq l_0+2e_0$. Let $(l,e) \in \P$ with 
$l+e \equiv x \bmod d$. Then $G$ is a descendant of $S_{/l}$ as well, as 
$l \leq l_0$. Let $R = S_{/l}$ and write $T = \gamma_l(S)$ and $T_i = 
\gamma_{l+i}(S)$. Since $l$ is a secondary root, it follows from Theorem 
15 in \cite{ELG08}, that $G$ is an extension of $T/T_n$ by $R$ for $n = 
x-l$. Hence there exists a cocycle class $\gamma$ in $H^2(R, T/T_n)$ 
defining $G$. 

Note that $x \geq l_0 + e_0 \geq l+e$ and $x \equiv l+e \bmod d$. Thus 
there exists an $i$ with $x = l+e + id$ and hence $n = e+id$. 
Since $l$ is a secondary root and $e$ is an offset for it, Theorem 
\ref{isom} applies and $H^2(R, T/T_n) \cong H^2(R,T) \oplus H^3(R,T_e)$.
Let $(\delta, \beta)$ be the image of $\gamma$ under this isomorphism.

Note that $n = x-l \geq l_0-l+2e_0 \geq 2e_0$. Choosing $i = l_0+e_0$ we
obtain that $n$ and $i$ satisfy the conditions of Theorem 26 in \cite{ELG08}.
As $G$ is a descendant of $S_{/i}$, it follows from Theorem 26b) in 
\cite{ELG08} that $\delta$ defines $S$ as extension of $T$ by $R$. Further,
Theorem 26a) of \cite{ELG08} asserts that $\delta$ is equivalent to $\alpha$; 
that is,
there exists an automorphism which maps $\delta$ to $\alpha$ without
changing the isomorphism type of $G$. Hence w.l.o.g. we can replace 
$\delta$ by $\alpha$. This yields that $G$ is a group in the infinite 
sequence defined by $(l,e)$ and the cocycle class $\beta \in H^3(R, T_e)
\cong M_0$.
\end{proof}

Our construction of infinite sequences is at least partially redundant, as 
we summarize in the following theorem. 

\begin{theorem}
Let $S$ be an infinite pro-$p$-group of coclass $r$ and $k$ an integer.
\begin{items}
\item[\rm a)]
Every infinite sequence in $\T_k(S)$ is associated with a unique pair 
$(l,e) \in \P$ and a (not necessarily unique) element $\beta \in 
H^3(S_{/l}, \gamma_{l+e}(S))$. 
\item[\rm b)]
Let $\beta$ and $\beta'$ be two elements of $H^3(S_{/l}, \gamma_{l+e}(S))$
defining the infinite sequences $(G_i(\beta) \mid i \in \N_0)$ and 
$(G_i(\beta') \mid i \in \N_0)$. Then $G_i(\beta) \cong G_i(\beta')$ holds
for all $i \in \N_0$ if and only if $G_0(\beta) \cong G_0(\beta')$ holds.
\end{items}
\end{theorem}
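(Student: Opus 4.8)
The plan is to prove the two parts of this redundancy theorem separately. Part a) is largely a bookkeeping consequence of the completeness theorem together with the irredundancy of $\P$. I would first recall that by Remark \ref{orders} the orders of the groups in an infinite sequence associated with $(l,e)$ are determined by $l+e \bmod d$, and the conditions of Definition \ref{pairs} force the pairs in $\P$ to realise distinct residues $1,\ldots,d$ modulo $d$. Hence, given any infinite sequence, matching the residue class of the orders of its groups singles out exactly one admissible pair $(l,e) \in \P$; this is the uniqueness claimed. For existence of the associated $\beta$, I would invoke the construction in Section \ref{secinfseq}: the completeness theorem shows each group in the sequence lies in some infinite sequence defined by $(l,e)$ and a class $\beta \in H^3(R, T_e) \cong M_0$, where $R = S_{/l}$ and $T_e = \gamma_{l+e}(S)$. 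The non-uniqueness of $\beta$ is exactly the phenomenon already flagged in the remark following the definition of $\G_\beta$, so no further argument is needed there.

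For part b), the nontrivial direction is clearly the implication $G_0(\beta) \cong G_0(\beta') \Rightarrow G_i(\beta) \cong G_i(\beta')$ for all $i$, since the converse is immediate by specialising to $i=0$. The idea I would pursue is to propagate an isomorphism $G_0(\beta) \to G_0(\beta')$ up the sequence using the compatibility of the maps $\pi_i$ with the powering isomorphisms $A_0 \cong A_i^{p^i}$. By Theorem \ref{isom} c), $\pi_i$ restricts to an isomorphism $M_0 \to M_i$, and by construction $\delta_i = \alpha_i + \pi_i(\beta)$ while $\delta_i' = \alpha_i + \pi_i(\beta')$; thus the two cocycle classes defining $G_i(\beta)$ and $G_i(\beta')$ differ by $\pi_i(\beta - \beta')$, exactly the image under $\pi_i$ of the difference governing $G_0$. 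So the problem reduces to showing that an automorphism of the data realising $G_0(\beta) \cong G_0(\beta')$ can be transported through $\pi_i$ to realise $G_i(\beta) \cong G_i(\beta')$.

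The key step, and the one I expect to be the main obstacle, is to make this transport precise at the level of group isomorphisms rather than merely cohomology classes. An isomorphism $G_0(\beta) \cong G_0(\beta')$ need not respect the extension structure over $R$; it may involve an automorphism of $R = S_{/l}$ and a compatible automorphism of the module $A_0$, together shifting $\pi_0(\beta)$ to $\pi_0(\beta')$ modulo the action on cohomology and modulo $N_0$ (since the $\alpha$-component is fixed up to equivalence by the argument already used in the completeness proof, via Theorem 26 of \cite{ELG08}). I would argue that any such pair of automorphisms of $(R, A_0)$ extends canonically to $(R, A_i)$: the module automorphism of $A_0 \cong A_i^{p^i}$ extends to $A_i$ because the $R$-submodule structure of $A_i$ is rigid (by Lemma \ref{isomtype} the relevant subgroups are the unique $S$-normal ones $\gamma_j(S)$, hence characteristic in the module-theoretic sense), and the decomposition $H^2(R,A_i) = N_i \oplus M_i$ of Theorem \ref{isom} a) is preserved by these extended automorphisms. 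Under this extension, $\pi_i$ intertwines the action on $M_0$ with the action on $M_i$, so the automorphism carrying $\pi_0(\beta)$ to $\pi_0(\beta')$ carries $\pi_i(\beta)$ to $\pi_i(\beta')$, yielding $G_i(\beta) \cong G_i(\beta')$. The delicate point requiring care is checking that the $N_i$-component (the $\alpha_i$-part) remains fixed up to equivalence under the extended automorphism, for which I would again lean on the structural results of \cite{ELG08} cited in the proof of the completeness theorem.
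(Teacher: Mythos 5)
Your part a) is exactly the paper's argument: uniqueness of $(l,e)$ from the congruence condition in Definition \ref{pairs} together with Remark \ref{orders}, and existence of $\beta$ from the completeness theorem. No issues there.

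Part b) is where the gap lies. The paper disposes of b) in one line by citing Theorem 25 of \cite{ELG08} \emph{together with the definition of offset}; you are in effect trying to reprove that external theorem, and the two steps you flag as delicate are precisely the ones you do not establish. First, the claim that a compatible automorphism $\psi$ of the $R$-module $A_0\cong A_i^{p^i}$ ``extends canonically'' to $A_i$ does not follow from rigidity of the submodule lattice: knowing that the only $R$-submodules of $A_i$ are the images of the $\gamma_j(S)$ tells you that any automorphism of $A_i$ restricts to $A_i^{p^i}$, but it says nothing about the converse lifting problem, and in dimension $d\geq 2$ a compatible automorphism of $T/T_e$ need not lift to a compatible automorphism of $T/T_{e+id}$. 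Second, even granting a lift, you must show the lifted pair carries $\delta_i=\alpha_i+\pi_i(\beta)$ to $\alpha_i+\pi_i(\beta')$, which requires both that $\pi_i$ intertwines the two actions on $M_0$ and $M_i$ and that the $N_i$-component of the image stays equivalent to $\alpha_i$; you defer both points to ``the structural results of \cite{ELG08}'', which is where the actual proof lives. Most tellingly, your argument never invokes the defining inequality $e \geq \max\{2d(a+b+1), d(l+r-1)\}$ of an offset. That inequality is the only hypothesis that makes the stabilisation of isomorphism classes along the sequence true, and the paper's proof explicitly flags it as the ingredient needed to apply \cite[Theorem 25]{ELG08}. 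An argument for b) that would go through for arbitrary $e$ cannot be correct, so the missing use of the offset condition is a genuine defect, not just an omission of detail.
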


\begin{proof}
a) The uniqueness of the pair $(l,e)$ follows from the definition of $\P$ 
and the fact that the orders of the groups in an infinite sequence depend
on $(l,e)$ modulo $d$. \\
b) This follows from Theorem 25 in \cite{ELG08} by using the definition of
offset.
\end{proof}

Our construction of infinite sequences underpins the periodicity graph
isomorphism as described in Section \ref{period}. More precisely, if
$S$, $k$ and $\P(S,k)$ are given, then with $l_0 = \max \{ l \mid (l,e) 
\in \P \}$ and $e_0 = \max \{e \mid (l,e) \in \P\}$ and $j = l_0 + 2e_0$
a graph isomorphism is induced via the infinite sequences.

\section{Parametrised presentations}
\label{secpara}

It is proved in \cite{ELG08} that the groups in an infinite sequence can 
be defined by a single parametrised presentation. We exhibit here that 
these presentations have a particularly nice form if the offset is 
divisible by the dimension of the underlying pro-$p$-group. 

Let $S$ be an infinite pro-$p$-group of dimension $d$ and let $(l,e)$
be a pair of secondary root and offset.
Let $R = S_{/l} = \langle g_1, \ldots, g_n \mid r_1, \ldots, r_m \rangle$
and let $t_1, \ldots, t_d$ be a (topological) generating set for $T = 
\gamma_l(S)$. To shorten notation, we write $t^{\v}$ for $t_1^{v_1} \cdots 
t_d^{v_d}$ where $\v = (v_1, \ldots, v_d) \in \Z_p^d$. We express the
action of $R$ on $T$ by vectors $\o_{hj} \in \Z_p^d$ with $t_j^{g_h} = 
t^{\o_{hj}}$ for $1 \leq h \leq n$ and $1 \leq j \leq d$. 

The pro-$p$-group group $S$ is an extension of $T$ by $R$ and thus it has
a pro-$p$-presentation which exhibits this extension structure. This has 
the generators $g_1, \ldots, g_n, t_1, \ldots t_d$ and its relations have 
the following form for some vectors $\v_1, \ldots, \v_m \in \Z_p^d$:
\begin{eqnarray*}
& & r_j = t^{\v_j} \mbox{ for } 1 \leq j \leq m, \\
& & t_j^{g_h} = t^{\o_{hj}} \mbox{ for } 1 \leq j \leq d, 1 \leq h \leq n,\\
& & [t_j,t_h] = 1 \mbox{ for } 1 \leq j,h \leq d.
\end{eqnarray*}

Recall that we assume that $e$ is divisible by $d$. Then the 
pro-$p$-presentation for $S$ can be modified readily to a presentation
for the finite $p$-group $S_{/l+e+id}$ by adding the relations 
$t_j^{p^{e/d+i}}$ for $1 \leq j \leq d$. The following theorem 
shows how the presentation can be modified to a presentation for a group 
in an infinite sequence.

\begin{theorem}
\label{parapres}
Let $(G_i \mid i \in \N_0)$ be an infinite sequence associated with $S$ and
$(l,e)$. Then there exist vectors $\w_1, \ldots, \w_m \in \Z_p^d$ so that 
every group $G_i$ can be defined by a presentation on the generators $g_1, 
\ldots, g_n, t_1, \ldots, t_d$ with relations of the form
\begin{eqnarray*}
& & r_j = t^{\v_j} t^{p^i \w_j} \mbox{ for } 1 \leq j \leq m, \\
& & t_j^{g_h} = t^{\o_{hj}} \mbox{ for } 1 \leq j \leq d, 1 \leq h \leq n,\\
& & [t_j,t_h] = 1 \mbox{ for } 1 \leq j,h \leq d, \\
& & t_j^{p^{e/d+i}} = 1 \mbox{ for } 1 \leq j \leq d.
\end{eqnarray*}
\end{theorem}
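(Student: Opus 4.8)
The plan is to read off the presentation directly from the cocycle description of the groups $G_i$ given in Section \ref{secinfseq}. Recall that such a sequence is defined by some $\beta \in H^3(R,T_e) \cong M_0$ and that $G_i$ is the extension of $A_i = T/T_{e+id}$ by $R$ via the cocycle class $\delta_i = \alpha_i + \pi_i(\beta)$, where $R = S_{/l}$ and $T = \gamma_l(S)$. I would use the standard correspondence between $2$-cocycles and relator tails: given the presentation $R = \langle g_1, \ldots, g_n \mid r_1, \ldots, r_m\rangle$, a presentation of the abelian $R$-module $A_i$ on $t_1, \ldots, t_d$ (the action relations $t_j^{g_h} = t^{\o_{hj}}$, the commutator relations $[t_j,t_h]=1$, and power relations describing $A_i$ as an abelian group), together with a $2$-cocycle, assemble into a presentation of the extension in which each relator $r_j$ of $R$ acquires a tail in $A_i$ equal to the value of the cocycle on $r_j$. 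Thus the whole task reduces to computing the tails attached to $\delta_i$ and to identifying the power relations of $A_i$.

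First I would treat the component $\alpha_i = \nu_i(\alpha)$. By hypothesis $\alpha$ defines $S$ as extension of $T$ by $R$, and the given pro-$p$-presentation of $S$ already records its tails as $r_j = t^{\v_j}$. Since $\alpha_i$ is the image of $\alpha$ under the projection $T \ra A_i$, the tail of $r_j$ for $\alpha_i$ is again $t^{\v_j}$, now read modulo $T_{e+id}$. For the power relations I would invoke Lemma \ref{isomtype}: as $l$ is a secondary root, $T \cong \Z_p^d$ and $T^{p^j} = T_{jd}$; because $d \mid e$ this gives $T^{p^{e/d+i}} = T_{(e/d+i)d} = T_{e+id}$, so that $A_i = T/T_{e+id}$ is homocyclic of exponent $p^{e/d+i}$ and is presented as an abelian group exactly by $[t_j,t_h]=1$ and $t_j^{p^{e/d+i}}=1$. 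Adding only these relations to the pro-$p$-presentation of $S$ therefore yields a presentation of $S_{/l+e+id}$, the extension of $A_i$ by $R$ via $\alpha_i$.

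It remains to account for the correction $\pi_i(\beta)$, and this is where the vectors $\w_j$ enter. I would fix once and for all a $2$-cocycle representative of $\beta$ in the summand $M_0 \subseteq H^2(R,A_0)$ and write its tails as $r_j \mapsto t^{\w_j}$ with $\w_j \in \Z_p^d$, which is possible since $t_1, \ldots, t_d$ generate $T$ and hence $A_0 = T/T_e$. The map $\pi_i$ is induced by the inclusion $A_0 \cong A_i^{p^i} \ra A_i$, which by Lemma \ref{isomtype} is the $p^i$-power map $t^{\w}T_e \mapsto t^{p^i\w}T_{e+id}$; hence $\pi_i(\beta)$ has tails $t^{p^i\w_j}$ in $A_i$. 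Since $A_i$ is abelian, adding the cocycles $\alpha_i$ and $\pi_i(\beta)$ multiplies their tails, so the tail of $r_j$ for $\delta_i$ is $t^{\v_j}t^{p^i\w_j}$, giving precisely the relations $r_j = t^{\v_j}t^{p^i\w_j}$. The crucial point, and the step I expect to need the most care, is that the $\w_j$ are independent of $i$: this is guaranteed by Theorem \ref{isom}c), which asserts that $\pi_i$ restricts to an isomorphism $M_0 \cong M_i$, so that a single fixed representative of $\beta$ propagates through the whole sequence with only the explicit $p^i$-scaling varying. Finally, I would verify that the assembled presentation defines $G_i$ and not a proper quotient or a larger group, by checking that $\langle t_1, \ldots, t_d\rangle$ is normal, abelian and isomorphic to $A_i$ with the prescribed $R$-action, that the quotient by it is $R$, and that the resulting extension class is $\delta_i$; this is the standard verification that an extension presentation realises its cocycle.
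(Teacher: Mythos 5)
Your proposal is correct and follows essentially the same route as the paper: the paper's (much terser) proof likewise reads the presentation off the extension structure, attributing the tails $t^{\v_j}$ to $\alpha_i$ and the tails $t^{p^i\w_j}$ to $\pi_i(\beta)$ via the fact that $\pi_i$ is powering by $p^i$. Your expansion of the details (the homocyclic identification $T_{e+id}=T^{p^{e/d+i}}$ using $d\mid e$, and the $i$-independence of the $\w_j$ via Theorem \ref{isom}c)) is a faithful filling-in of what the paper leaves implicit.
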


\begin{proof}
This follows directly from the construction on the infinite sequences: these
are extensions of $A_i = T/\gamma_{l+e+id}(S) = T/T^{p^{t+i}}$ by $R$ and the 
the considered cocycle classes $\delta_i$ have the form $\alpha_i + 
\pi_i(\beta)$, where $\pi_i$ is obtained by powering with $p^i$. The 
vectors $\v_1, \ldots, \v_m$ correspond to the cocycle class $\alpha_i$
and the vectors $\w_1, \ldots, \w_m$ correspond to $\pi_i(\beta)$.
\end{proof}

We note that the exponents in the presentation for the groups $G_i$ are
defined as elements in $\Z_p$, but the relations $t_j^{p^{e/d+i}}$ 
allow us to consider them as elements in $\Z_p / p^{e/d+i} \Z_p$.

\begin{remark}
\label{vector}
Every infinite sequence associated with $S$ and $(l,e)$ can thus be defined
by a presentation of $S$ as extension of $T$ by $R$ and a list of vectors
$\w_1, \ldots, \w_m \in \Z_p^d$.
\end{remark}

\section{The graphs $\G(2,1)$ and $\G(3,1)$}

The graphs $\G(2,1)$ and $\G(3,1)$ have been known for a long while. The
groups in these graphs have been classified by Blackburn \cite{Bla58}.
We exhibit the graphs in the following picture.

{\footnotesize
\begin{center}
\begin{tabular}{ccc}
\begindc{\undigraph}[40]
\obj(0,4){$2^2$}[\west]
\obj(0,4)[V4]{ $V_4$}
\obj(2,4){ $C4$}
\obj(0,3){ $2^3$}[\west]
\obj(0,3)[D8]{$D_8$}[\east] \mor{V4}{D8}{}
\obj(1,3)[Q8]{$Q_8$}[\east] \mor{V4}{Q8}{}
\obj(0,2){$2^4$}[\west]
\obj(0,2)[D16]{\textcolor{black}{$D_{16}$}}[\east] \mor{D8}{D16}{}
\obj(1,2)[Q16]{\textcolor{black}{$Q_{16}$}}[\east] \mor{D8}{Q16}{}
\obj(2,2)[S16]{\textcolor{black}{$S_{16}$}}[\east] \mor{D8}{S16}{}
\obj(0,1){$2^4$}[\west]
\obj(0,1)[D32]{\textcolor{black}{$D_{32}$}}[\east] \mor{D16}{D32}{}
\obj(1,1)[Q32]{\textcolor{black}{$Q_{32}$}}[\east] \mor{D16}{Q32}{}
\obj(2,1)[S32]{\textcolor{black}{$S_{32}$}}[\east] \mor{D16}{S32}{}
\obj(0,0){$2^5$}[\west]
\obj(0,0)[D64]{\textcolor{black}{$D_{64}$}}[\east] \mor{D32}{D64}{}
\obj(0,0){$\vdots$}[\south]
\obj(1,0)[Q64]{\textcolor{black}{$Q_{64}$}}[\east] \mor{D32}{Q64}{}
\obj(2,0)[S64]{\textcolor{black}{$S_{64}$}}[\east] \mor{D32}{S64}{}

\obj(0,2){}
\obj(0,1){}
\obj(0,0){}
\obj(1,2){}
\obj(1,1){}
\obj(1,0){}
\obj(2,2){}
\obj(2,1){}
\obj(2,0){}

\enddc
&\hspace{2cm} &
\begindc{\undigraph}[25]
\obj(0,6)[a1]{$3^2$}[\west]
\obj(2,6){}
\obj(0,5)[a2]{$3^3$}[\west] \mor{a1}{a2}{}
\obj(1,5)[b2]{} \mor{a1}{b2}{}
\obj(0,4)[a3]{$3^4$}[\west] \mor{a2}{a3}{}
\obj(1,4)[b3]{} \mor{a2}{b3}{}
\obj(2,4)[c3]{} \mor{a2}{c3}{}
\obj(3,4)[d3]{} \mor{a2}{d3}{}
\obj(0,3)[a4]{$3^5$}[\west] \mor{a3}{a4}{}
\obj(1,3)[b4]{} \mor{a3}{b4}{}
\obj(2,3)[c4]{} \mor{a3}{c4}{}
\obj(3,3)[d4]{} \mor{a3}{d4}{}
\obj(4,3)[e4]{} \mor{a3}{e4}{}
\obj(5,3)[f4]{} \mor{a3}{f4}{}
\obj(0,2)[a5]{$3^6$}[\west] \mor{a4}{a5}{}
\obj(1,2)[b5]{} \mor{a4}{b5}{}
\obj(2,2)[c5]{} \mor{a4}{c5}{}
\obj(3,2)[d5]{} \mor{a4}{d5}{}
\obj(4,2)[e5]{} \mor{a4}{e5}{}
\obj(5,2)[f5]{} \mor{a4}{f5}{}
\obj(6,2)[g5]{} \mor{a4}{g5}{}
\obj(0,1)[a6]{$3^7$}[\west] \mor{a5}{a6}{}
\obj(1,1)[b6]{} \mor{a5}{b6}{}
\obj(2,1)[c6]{} \mor{a5}{c6}{}
\obj(3,1)[d6]{} \mor{a5}{d6}{}
\obj(4,1)[e6]{} \mor{a5}{e6}{}
\obj(5,1)[f6]{} \mor{a5}{f6}{}
\obj(0,0)[a7]{$3^8$}[\west] \mor{a6}{a7}{}
\obj(0,0){$\vdots$}[\south] 
\obj(1,0)[b7]{} \mor{a6}{b7}{}
\obj(2,0)[c7]{} \mor{a6}{c7}{}
\obj(3,0)[d7]{} \mor{a6}{d7}{}
\obj(4,0)[e7]{} \mor{a6}{e7}{}
\obj(5,0)[f7]{} \mor{a6}{f7}{}
\obj(6,0)[g7]{} \mor{a6}{g7}{}
\enddc \\ \\
{\normalsize The graph $\G(2,1)$.} & & {\normalsize The graph $\G(3,1)$.}
\end{tabular}
\end{center} }

We demonstrate a general method to explicitly compute the pairs $\P(S,k)$ 
as in Section \ref{secpairs} and explicitly determine the cohomology groups 
as in Section \ref{secinfseq} in the example $\G(2,1)$. We then show how the
infinite sequences can be described in a highly compact form using the
presentations of Section \ref{secpara} in the example $\G(3,1)$.

Both graphs $\G(2,1)$ and $\G(3,1)$ contain a single coclass tree and this
has depth $1$. Hence we use $k=1$ in all example computations of this section. 

\subsection{The graph $\G(2,1)$}

Let $S$ be the infinite pro-$2$-group of coclass $1$. Then $S = 
\Z_2 \split C_2$, where the cyclic group $C_2$ acts by additive inversion 
on the $2$-adic integers $\Z_2$. Thus $S$ has dimension $d=1$. Further, 
its primary root is $u=2$, as $\gamma_2(S) = 2 \Z_2\cong \Z_2$ is 
torsion-free abelian and $S_{/2} \cong C_2 \times C_2$ has coclass $1$. 
\medskip

{\bf Secondary root.} Every $l \geq 2$ is a secondary root for $S$ and $k$. 
This can be observed by an explicit investigation of all groups in the graph 
using the classification by Blackburn \cite{Bla58}. Note that Theorem 
\ref{secroot} yields that every $l \geq 4$ is a possible secondary root. 
\medskip

{\bf Offset.} 
We determine the offsets for $l=2$. Let $R = S_{/2}$ and $T = \gamma_2(S)$. 
The kernel $H$ of the action of $R$ on $T$ is cyclic of order $2$. Thus we 
obtain $a = 1$ and $b = 0$ for the logarithmic exponents of the commutator 
quotient and Schur multiplicator of $H$. It follows that every $e \geq 4$ 
is an offset for $l=2$. Theorem \ref{offset} yields that every $e \geq 10$ 
is an offset for $l=2$.
\medskip

{\bf Pairs.} We choose $\P = \{(2,4)\}$ and note that this satisfies the
conditions of Definition \ref{pairs}. Thus we consider $l = 2$ and $e = 4$
and, accordingly, $R = S_{/2}$ and $T \cong \Z_2$ in the following. This
yields that $A_i = T/T_{e+id} = T/T_{4+i}$ is cyclic of order $2^{4+i}$. 
\medskip

{\bf Cohomology.}
We determine the cohomology group $H^2(R,A_i)$ explicitly with the method 
of Section 8.7.2 of \cite{HEO05}. For this purpose we use that
every element $\epsilon$ in $Z^2(R, A_i)$ defines an extension of $A_i$ by 
$R$. This extension has a presentation on the generators $g_1, g_2, t$ with 
relations 
\[ g_1^2 = t^x, \; 
   g_2^2 = t^y, \;
   g_2^{g_1} = g_2 t^z, \; 
   t^{g_1} = t^{-1}, \; 
   t^{g_2} = t, \;
   t^{2^{4+i}} = 1.\]
for certain $x,y,z \in \{0, \ldots, 2^{4+i}-1\}$. Thus we obtain a map
\[ Z^2(R, A_i) \ra (\Z_2 / 2^{4+i} \Z_2)^3 : \epsilon \ms (x,y,z). \]
Let $\hat{Z}_i$ and $\hat{B}_i$ denote the images of $Z^2(R,A_i)$ and 
$B^2(R, A_i)$, respectively, under this map. An explicit calculation 
(see Section 8.7.2 of \cite{HEO05} for details) yields that
\[ \hat{Z}_i = \langle (2^{3+i}, 0, 0), (0,0,2^{3+i}), (0,1,-1) \rangle 
    \;\; \mbox{ and } \;\; 
   \hat{B}_i = 2 \hat{Z}_i.\]

By Lemma 8.47 in \cite{HEO05}, we obtain that $H^2(R,A_i) \cong 
\hat{Z}_i/\hat{B}_i$ and thus $H^2(R, A_i)$ is elementary abelian 
of order $2^3$ for every $i \in \N_0$. 
\medskip

{\bf The image of $H^2(R,T)$.}
A presentation for $S$ as extension of $T$ by $R$ is exhibited by the 
following presentation on the generators $g_1, g_2, t$ with relations
\[ g_1^2 = 1, \; 
   g_2^2 = t, \;
   g_2^{g_1} = g_2 t^{-1}, \; 
   t^{g_1} = t^{-1}, \; 
   t^{g_2} = t. \]

Comparing this presentation with the presentation for the extensions of
$A_i$ by $R$ above shows that $\hat{\alpha}_i = (0,1,-1)$ defines the group 
$S_{/6+i}$ as extension of $A_i$ by $R$. 
\medskip

{\bf The image of $H^2(R, B_i)$.}
We find that $B_i = T_3/T_{4+i} \leq A_i$ and thus $B_i = 2^3 A_i$. Thus 
the image $\hat{M}_i$ of $H^2(R,B_i)$ in our construction for $H^2(R,A_i)$ 
can be read off easily as
\[ \hat{M}_i = 2^{3+i} \langle (1,0,0), (0,0,1) \rangle + 
               \hat{B}_i/\hat{B}_i. \]

Hence $\hat{M}_i$ is elementary abelian of order $4$ for every $i \in \N_0$.
Recall that $M_i$ is obtained from $M_0$ by powering with $p^i$. As we use
additive notation, it follows that $\hat{M}_i$ is obtained from $\hat{M}_0$
by multiplication with $2^i$.
\medskip

{\bf The infinite sequences.}
There are $4 = |M_0|$ infinite sequences arising from our construction in
this case. These are exhibited in the following table which lists for each
sequence its defining element $\hat{\beta} \in \hat{M}_0 \cong H^3(R,T_e)$,
the elements $\hat{\delta}_i = \hat{\alpha}_i + \hat{\beta}_i$ defining the
groups $G_i(\beta)$ and the names of the obtained groups. The latter shows
that there are two different cocycles which yield isomorphic sequences.

\begin{center}
\begin{tabular}{l|l|l}
\hspace{0.5cm} $\hat{\beta}$ & \hspace{1.5cm} $\hat{\delta}_i$ & name \\
\hline 
 $(0,0,0)$     & $(0,1-1)$ & dihedral groups \\
 $(2^3,0,0)$   & $(0,1-1)+2^i(2^{3},0,0)$ & quaternion groups \\
 $(2^3,0,2^3)$ & $(0,1-1)+2^i(2^{3},0,2^{3})$ & semidihedral groups \\
 $(0,0,2^3)$   & $(0,1-1)+2^i(0, 0, 2^{3})$ & semidihedral groups \\
\end{tabular}
\end{center}

\subsection{The graph $\G(3,1)$}

Let $S$ be the infinite pro-$3$-group of coclass $1$. Then $S = \Z_3^2
\split C_3$ and hence $S$ has dimension $d=2$. The classification by
Blackburn shows that $u=2$ is the primary root for $S$ and $l=3$ is
the minimal secondary root. We choose
\[ \P = \{(3,12), (4,14)\} \]
as set of pairs. This choice yields that $d \mid e$ for both offsets $e$
and hence Theorem \ref{parapres} can be used to describe the infinite
sequences corresponding to these pairs. This is done in the following.
\medskip

{\bf The case \boldmath $(l,e) = (3,12)$.}
A presentation of $S$ as extension of $\gamma_3(S)$ by $S_{/3}$ has the
generators $g_1, g_2, g_3, t_1, t_2$ and the relations
\begin{center}
\begin{tabular}{L{14.5cm}}
  g_1^3 = 1, \;
  g_2^3 = t_1 t_2, \;
  g_2^{g_1} = g_2g_3, \;
  g_3^3 = t_1^{-3} t_2^{-2}, \;
  g_3^{g_1} = g_3 t_1^2 t_2, \;
  g_3^{g_2} = g_3, \\
  t_1^{g_1} = t_1t_2, \; 
  t_1^{g_2} = t_1, \; 
  t_1^{g_3} = t_1, \;
  t_2^{g_1} = t_1^{-3}t_2^{-2}, \; 
  t_2^{g_2} = t_2, \; 
  t_2^{g_3} = t_2, \;
  t_2^{t_1} = t_2.
\end{tabular}
\end{center}
The first 3 generators and the first 6 relations in this presentation of
$S$ correspond to a presentation of $R$. By Remark \ref{vector}, each 
infinite sequence associated with this case can thus be described by a 
list of vectors $\W = (\w_1, \ldots, \w_6)$ with $\w_i \in \Z_3^2$.
Let 
\begin{center}
\begin{tabular}{R{0.6cm}C{0.5cm}L{12cm}}
  \W_1 &=& 3^5 ((0,1),(0,0),(0,0),(0,0),(0,0),(0,0)), \\
  \W_2 &=& 3^5 ((0,0),(0,0),(0,0),(0,0),(0,1),(0,0)), \\
  \W_3 &=& 3^5 ((0,0),(0,0),(0,0),(0,0),(0,0),(0,1)).
\end{tabular}
\end{center}
Then the lists of vectors $\W$ defining the infinite sequences are exactly 
the $\Z_3$-linear combinations of $\W_1, \W_2, \W_3$. As $e/d = 6$, we can
work modulo $3^6 \Z_3$. Thus there are 27 linear combinations of lists of 
vectors. Among these, the elements in $\{0, \W_1, \W_2, \W_3, \W_2+\W_3, 
\W_1+2\W_3 \}$ yield the six non-isomorphic infinite sequences.
\medskip

{\bf The case \boldmath $(l,e) = (4,14)$.}
A presentation of $S$ as extension of $\gamma_4(S)$ by $S_{/4}$ has the
generators $g_1, g_2, g_3, g_4, t_1, t_2$ and the relations
\begin{center}
\begin{tabular}{L{14.5cm}}
    g_1^3 = 1, \;
    g_2^3 = g_4^2, \;
    g_2^{g_1} = g_2g_3, \;
    g_3^3 = t_1^2 t_2^2, \;
    g_3^{g_1} = g_3g_4 t_2, \; 
    g_3^{g_2} = g_3 , \;
    g_4^3 = t_1^{-2}t_2^{-3}, \;
    g_4^{g_1} = g_4t_1t_2, \; 
    g_4^{g_2} = g_4, \;
    g_4^{g_3} = g_4, \;
    t_1^{g_1} = t_1t_2^3, \; 
    t_1^{g_2} = t_1, \; 
    t_1^{g_3} = t_1, \; 
    t_1^{g_4} = t_1, \;
    t_2^{g_1} = t_1^{-1}t_2^{-2}, \; 
    t_2^{g_2} = t_2, \; 
    t_2^{g_3} = t_2, \;
    t_2^{g_4} = t_2, \;
    t_2^{t_1} = t_2, 
\end{tabular}
\end{center}
Let $\ol{W} = \langle \W_1, \W_2, \W_3 \rangle_{\Z_3}$ with 
\begin{center}
\begin{tabular}{R{0.6cm}C{0.5cm}L{12cm}}
  \W_1 &=& 3^6 ((1,0),(0,0),(0,0),(0,0),(0,0),(0,0),(0,0),(0,0),(0,0),(0,0)), \\
  \W_2 &=& 3^6 ((0,0),(0,0),(0,0),(0,0),(0,0),(1,0),(0,0),(0,0),(0,0),(0,0)), \\
  \W_3 &=& 3^6 ((0,0),(0,0),(0,0),(1,0),(0,0),(-1,0),(0,0),(2,0),(0,0),(0,0)).
\end{tabular}
\end{center}
Then every $\W \in \ol{W}$ defines an infinite sequence. As $e/d = 7$, we 
can work modulo $3^7\Z_3$ which yields 27 elements in $\ol{W}$. The elements 
in $\{0, \W_1, \W_2, \W_3, \W_2+\W_3, \W_1+2\W_3, 2\W_2+2\W_3 \}$ yield the 
seven different infinite sequences.

\section{The graph $\G(2,2)$}

The $2$-groups of coclass $2$ have first been investigated by Newman \& 
O'Brien \cite{NOB99}. We recall the explicit picture of the resulting graph
$\G(2,2)$ in the following.

{\footnotesize
\begin{center}
\begindc{\undigraph}[11]
\obj(0,24)[a31]{$2^3$}[\west]
\obj(32,24)[a32]{}
\obj(37,24)[a33]{}
\obj(0,21)[a41]{$2^4$}[\west] \mor{a41}{a31}{}
\obj(2,21)[a43]{} \mor{a43}{a31}{}
\obj(28,21)[a42]{} \mor{a42}{a31}{}
\obj(32,21)[a44]{} \mor{a44}{a32}{}
\obj(33,21)[a45]{} \mor{a45}{a32}{}
\obj(34,21)[a46]{} \mor{a46}{a32}{}
\obj(0,18)[a51]{$2^5$}[\west] \mor{a51}{a41}{}
\obj(2,18)[a53]{} \mor{a53}{a41}{}
\obj(3,18)[a54]{} \mor{a54}{a41}{}
\obj(4,18)[a55]{} \mor{a55}{a41}{}
\obj(5,18)[a56]{} \mor{a56}{a41}{}
\obj(22,18)[a52]{} \mor{a52}{a41}{}
\obj(28,18)[a57]{} \mor{a57}{a42}{}
\obj(29,18)[a59]{} \mor{a59}{a42}{}
\obj(30,18)[a58]{} \mor{a58}{a42}{}
\obj(32,18)[a510]{} \mor{a510}{a44}{}
\obj(33,18)[a511]{} \mor{a511}{a44}{}
\obj(34,18)[a512]{} \mor{a512}{a44}{}
\obj(35,18)[a513]{} \mor{a513}{a44}{}
\obj(36,18)[a514]{} \mor{a514}{a44}{}
\obj(37,18)[a515]{} \mor{a515}{a44}{}
\obj(0,15)[a61]{$2^6$}[\west] \mor{a61}{a51}{}
\obj(5,15)[a63]{} \mor{a63}{a51}{}
\obj(10,15)[a62]{} \mor{a62}{a51}{}
\obj(12,15)[a64]{} \mor{a64}{a51}{}
\obj(13,15)[a65]{} \mor{a65}{a51}{}
\obj(14,15)[a66]{} \mor{a66}{a51}{}
\obj(22,15)[a67]{} \mor{a67}{a52}{}
\obj(23,15)[a68]{} \mor{a68}{a52}{}
\obj(24,15)[a69]{} \mor{a69}{a52}{}
\obj(25,15)[a610]{} \mor{a610}{a52}{}
\obj(26,15)[a611]{} \mor{a611}{a52}{}
\obj(27,15)[a612]{} \mor{a612}{a52}{}
\obj(28,15)[a613]{} \mor{a613}{a57}{}
\obj(29,15)[a615]{} \mor{a615}{a57}{}
\obj(30,15)[a614]{} \mor{a614}{a57}{}
\obj(31,15)[a616]{} \mor{a616}{a58}{}
\obj(32,15)[a617]{} \mor{a617}{a510}{}
\obj(33,15)[a618]{} \mor{a618}{a510}{}
\obj(34,15)[a619]{} \mor{a619}{a510}{}
\obj(35,15)[a620]{} \mor{a620}{a510}{}
\obj(36,15)[a621]{} \mor{a621}{a510}{}
\obj(37,15)[a622]{} \mor{a622}{a510}{}
\obj(0,12)[a71]{$2^7$}[\west] \mor{a71}{a61}{}
\obj(1,12)[a73]{} \mor{a73}{a61}{}
\obj(2,12)[a74]{} \mor{a74}{a61}{}
\obj(3,12)[a75]{} \mor{a75}{a61}{}
\obj(4,12)[a76]{} \mor{a76}{a61}{}
\obj(5,12)[a72]{} \mor{a72}{a61}{}
\obj(6,12)[a711]{} \mor{a711}{a63}{}
\obj(7,12)[a712]{} \mor{a712}{a63}{}
\obj(8,12)[a713]{} \mor{a713}{a63}{}
\obj(10,12)[a77]{} \mor{a77}{a62}{}
\obj(12,12)[a79]{} \mor{a79}{a62}{}
\obj(13,12)[a710]{} \mor{a710}{a62}{}
\obj(18,12)[a78]{} \mor{a78}{a62}{}
\obj(22,12)[a714]{} \mor{a714}{a67}{}
\obj(23,12)[a715]{} \mor{a715}{a67}{}
\obj(24,12)[a716]{} \mor{a716}{a67}{}
\obj(25,12)[a717]{} \mor{a717}{a67}{}
\obj(26,12)[a718]{} \mor{a718}{a67}{}
\obj(27,12)[a719]{} \mor{a719}{a67}{}
\obj(28,12)[a720]{} \mor{a720}{a613}{}
\obj(29,12)[a722]{} \mor{a722}{a613}{}
\obj(30,12)[a721]{} \mor{a721}{a613}{}
\obj(31,12)[a723]{} \mor{a723}{a614}{}
\obj(32,12)[a724]{} \mor{a724}{a617}{}
\obj(33,12)[a725]{} \mor{a725}{a617}{} 
\obj(34,12)[a726]{} \mor{a726}{a617}{}
\obj(35,12)[a727]{} \mor{a727}{a617}{}
\obj(36,12)[a728]{} \mor{a728}{a617}{}
\obj(37,12)[a729]{} \mor{a729}{a617}{}
\obj(0,9)[a81]{$2^8$}[\west] \mor{a81}{a71}{}
\obj(1,9)[a83]{} \mor{a83}{a71}{}
\obj(2,9)[a84]{} \mor{a84}{a71}{}
\obj(3,9)[a85]{} \mor{a85}{a71}{}
\obj(4,9)[a86]{} \mor{a86}{a71}{}
\obj(5,9)[a82]{} \mor{a82}{a71}{}
\obj(6,9)[a87]{} \mor{a87}{a72}{}
\obj(7,9)[a88]{} \mor{a88}{a72}{}
\obj(8,9)[a89]{} \mor{a89}{a72}{}
\obj(9,9)[a810]{} \mor{a810}{a72}{}
\obj(10,9)[a811]{} \mor{a811}{a77}{}
\obj(11,9)[a812]{} \mor{a812}{a77}{}
\obj(12,9)[a813]{} \mor{a813}{a77}{}
\obj(13,9)[a814]{} \mor{a814}{a77}{}
\obj(14,9)[a815]{} \mor{a815}{a77}{}
\obj(15,9)[a816]{} \mor{a816}{a77}{}
\obj(16,9)[a817]{} \mor{a817}{a77}{}
\obj(17,9)[a818]{} \mor{a818}{a77}{}
\obj(18,9)[a819]{} \mor{a819}{a78}{}
\obj(19,9)[a820]{} \mor{a820}{a78}{}
\obj(20,9)[a821]{} \mor{a821}{a78}{}
\obj(21,9)[a822]{} \mor{a822}{a78}{}
\obj(22,9)[a823]{} \mor{a823}{a714}{}
\obj(23,9)[a824]{} \mor{a824}{a714}{}
\obj(24,9)[a825]{} \mor{a825}{a714}{}
\obj(25,9)[a826]{} \mor{a826}{a714}{}
\obj(26,9)[a827]{} \mor{a827}{a714}{}
\obj(27,9)[a828]{} \mor{a828}{a714}{}
\obj(28,9)[a829]{} \mor{a829}{a720}{}
\obj(29,9)[a831]{} \mor{a831}{a720}{}
\obj(30,9)[a830]{} \mor{a830}{a720}{}
\obj(31,9)[a832]{} \mor{a832}{a721}{}
\obj(32,9)[a833]{} \mor{a833}{a724}{}
\obj(33,9)[a834]{} \mor{a834}{a724}{}
\obj(34,9)[a835]{} \mor{a835}{a724}{}
\obj(35,9)[a836]{} \mor{a836}{a724}{} 
\obj(36,9)[a837]{} \mor{a837}{a724}{}
\obj(37,9)[a838]{} \mor{a838}{a724}{}
\obj(0,6)[a91]{$2^9$}[\west] \mor{a91}{a81}{}
\obj(1,6)[a93]{} \mor{a93}{a81}{}
\obj(2,6)[a94]{} \mor{a94}{a81}{}
\obj(3,6)[a95]{} \mor{a95}{a81}{}
\obj(4,6)[a96]{} \mor{a96}{a81}{}
\obj(5,6)[a92]{} \mor{a92}{a81}{}
\obj(6,6)[a97]{} \mor{a97}{a82}{}
\obj(7,6)[a98]{} \mor{a98}{a82}{}
\obj(8,6)[a99]{} \mor{a99}{a82}{}
\obj(10,6)[a910]{} \mor{a910}{a811}{}
\obj(12,6)[a912]{} \mor{a912}{a811}{}
\obj(13,6)[a913]{} \mor{a913}{a811}{}
\obj(18,6)[a911]{} \mor{a911}{a811}{}
\obj(22,6)[a914]{} \mor{a914}{a823}{}
\obj(23,6)[a915]{} \mor{a915}{a823}{}
\obj(24,6)[a916]{} \mor{a916}{a823}{}
\obj(25,6)[a917]{} \mor{a917}{a823}{}
\obj(26,6)[a918]{} \mor{a918}{a823}{}
\obj(27,6)[a919]{} \mor{a919}{a823}{}
\obj(28,6)[a920]{} \mor{a920}{a829}{}
\obj(29,6)[a922]{} \mor{a922}{a829}{}
\obj(30,6)[a921]{} \mor{a921}{a829}{}
\obj(31,6)[a923]{} \mor{a923}{a830}{}
\obj(32,6)[a924]{} \mor{a924}{a833}{}
\obj(33,6)[a925]{} \mor{a925}{a833}{}
\obj(34,6)[a926]{} \mor{a926}{a833}{}
\obj(35,6)[a927]{} \mor{a927}{a833}{}
\obj(36,6)[a928]{} \mor{a928}{a833}{}
\obj(37,6)[a929]{} \mor{a929}{a833}{}
\obj(0,3)[a101]{$2^{10}$}[\west] \mor{a101}{a91}{}
\obj(1,3)[a103]{} \mor{a103}{a91}{}
\obj(2,3)[a104]{} \mor{a104}{a91}{}
\obj(3,3)[a105]{} \mor{a105}{a91}{}
\obj(4,3)[a106]{} \mor{a106}{a91}{}
\obj(5,3)[a102]{} \mor{a102}{a91}{}
\obj(6,3)[a107]{} \mor{a107}{a92}{}
\obj(7,3)[a108]{} \mor{a108}{a92}{}
\obj(8,3)[a109]{} \mor{a109}{a92}{}
\obj(9,3)[a1010]{} \mor{a1010}{a92}{}
\obj(10,3)[a1011]{} \mor{a1011}{a910}{}
\obj(11,3)[a1012]{} \mor{a1012}{a910}{}
\obj(12,3)[a1013]{} \mor{a1013}{a910}{}
\obj(13,3)[a1014]{} \mor{a1014}{a910}{}
\obj(14,3)[a1015]{} \mor{a1015}{a910}{}
\obj(15,3)[a1016]{} \mor{a1016}{a910}{}
\obj(16,3)[a1017]{} \mor{a1017}{a910}{}
\obj(17,3)[a1018]{} \mor{a1018}{a910}{}
\obj(18,3)[a1019]{} \mor{a1019}{a911}{}
\obj(19,3)[a1020]{} \mor{a1020}{a911}{}
\obj(20,3)[a1021]{} \mor{a1021}{a911}{}
\obj(21,3)[a1022]{} \mor{a1022}{a911}{}
\obj(22,3)[a1023]{} \mor{a1023}{a914}{}
\obj(23,3)[a1024]{} \mor{a1024}{a914}{}
\obj(24,3)[a1025]{} \mor{a1025}{a914}{}
\obj(25,3)[a1026]{} \mor{a1026}{a914}{}
\obj(26,3)[a1027]{} \mor{a1027}{a914}{}
\obj(27,3)[a1028]{} \mor{a1028}{a914}{}
\obj(28,3)[a1029]{} \mor{a1029}{a920}{}
\obj(29,3)[a1031]{} \mor{a1031}{a920}{}
\obj(30,3)[a1030]{} \mor{a1030}{a920}{}
\obj(31,3)[a1032]{} \mor{a1032}{a921}{}
\obj(32,3)[a1033]{} \mor{a1033}{a924}{}
\obj(33,3)[a1034]{} \mor{a1034}{a924}{}
\obj(34,3)[a1035]{} \mor{a1035}{a924}{}
\obj(35,3)[a1036]{} \mor{a1036}{a924}{}
\obj(36,3)[a1037]{} \mor{a1037}{a924}{}
\obj(37,3)[a1038]{} \mor{a1038}{a924}{}
\obj(0,0)[a111]{$2^{11}$}[\west] \mor{a111}{a101}{}
\obj(0,0){$\vdots$}[\south]
\obj(1,0)[a113]{} \mor{a113}{a101}{}
\obj(2,0)[a114]{} \mor{a114}{a101}{}
\obj(3,0)[a115]{} \mor{a115}{a101}{}
\obj(4,0)[a116]{} \mor{a116}{a101}{}
\obj(5,0)[a112]{$\vdots$}[\south] \mor{a112}{a101}{}
\obj(6,0)[a117]{} \mor{a117}{a102}{}
\obj(7,0)[a118]{} \mor{a118}{a102}{}
\obj(8,0)[a119]{} \mor{a119}{a102}{}
\obj(10,0)[a1110]{$\vdots$}[\south] \mor{a1110}{a1011}{}
\obj(12,0)[a1112]{} \mor{a1112}{a1011}{}
\obj(13,0)[a1113]{} \mor{a1113}{a1011}{}
\obj(18,0)[a1111]{$\vdots$}[\south] \mor{a1111}{a1011}{}
\obj(22,0)[a1114]{$\vdots$}[\south] \mor{a1114}{a1023}{}
\obj(23,0)[a1115]{} \mor{a1115}{a1023}{}
\obj(24,0)[a1116]{} \mor{a1116}{a1023}{}
\obj(25,0)[a1117]{} \mor{a1117}{a1023}{}
\obj(26,0)[a1118]{} \mor{a1118}{a1023}{}
\obj(27,0)[a1119]{} \mor{a1119}{a1023}{}
\obj(28,0)[a1120]{$\vdots$}[\south] \mor{a1120}{a1029}{}
\obj(29,0)[a1122]{} \mor{a1122}{a1029}{}
\obj(30,0)[a1121]{$\vdots$}[\south] \mor{a1121}{a1029}{}
\obj(31,0)[a1123]{} \mor{a1123}{a1030}{}
\obj(32,0)[a1124]{$\vdots$}[\south] \mor{a1124}{a1033}{}
\obj(33,0)[a1125]{} \mor{a1125}{a1033}{}
\obj(34,0)[a1126]{} \mor{a1126}{a1033}{}
\obj(35,0)[a1127]{} \mor{a1127}{a1033}{}
\obj(36,0)[a1128]{} \mor{a1128}{a1033}{}
\obj(37,0)[a1129]{} \mor{a1129}{a1033}{}

\tiny
\obj(0,12){}
\obj(1,12){}
\obj(2,12){}
\obj(3,12){}
\obj(4,12){}
\obj(5,12){}
\obj(6,12){}
\obj(7,12){}
\obj(8,12){}
\obj(10,12){}
\obj(12,12){}
\obj(13,12){}
\obj(18,12){}
\obj(22,12){}
\obj(23,12){}
\obj(24,12){}
\obj(25,12){}
\obj(26,12){}
\obj(27,12){}
\obj(28,12){}
\obj(29,12){}
\obj(30,12){}
\obj(31,12){}
\obj(32,12){}
\obj(33,12){}
\obj(34,12){}
\obj(35,12){}
\obj(36,12){}
\obj(37,12){}
\obj(0,9){}
\obj(1,9){}
\obj(2,9){}
\obj(3,9){}
\obj(4,9){}
\obj(5,9){}
\obj(6,9){}
\obj(7,9){}
\obj(8,9){}
\obj(9,9){}
\obj(10,9){}
\obj(11,9){}
\obj(12,9){}
\obj(13,9){}
\obj(14,9){}
\obj(15,9){}
\obj(16,9){}
\obj(17,9){}
\obj(18,9){}
\obj(19,9){}
\obj(20,9){}
\obj(21,9){}

\enddc
{\normalsize The graph $\G(2,2)$.}
\end{center} }

There are essentially five different infinite paths in $\G(2,2)$ which
correspond to the five infinite pro-$2$-groups of coclass $2$. In the
following table we list some parameters for each of these five groups.

\begin{center}
\begin{tabular}{c|cccc}
group & dimension $d$ & depth $k$ & primary root $u$ & minimal secondary 
root $l$  \\
\hline
$S_1$ & 2 & 2 & 5 & 5  \\
$S_2$ & 2 & 2 & 5 & 5  \\
$S_3$ & 1 & 1 & 4 & 4  \\
$S_4$ & 1 & 2 & 3 & 3  \\
$S_5$ & 1 & 1 & 2 & 2  \\
\end{tabular}
\end{center}

We exhibit presentations for each of the infinite sequences of $\G(2,2)$
in the same format as for $\G(3,1)$. The following table lists the pairs
we consider and gives an overview on the cohomology and the infinite
sequences in each case.

\begin{center}
\begin{tabular}{cc|ccc}
group & $(l,e)$ & $H^2(R,A_i)$ & $|H^3(R,T_e)|$ & \# non-isom sequences \\
\hline
$S_1$ & $(5,12)$ & $C_2^4 \times C_4^2$ & 16 & 10 \\
$S_1$ & $(6,14)$ & $C_2^4 \times C_4 \times C_8$ & 16 & 9 \\
$S_2$ & $(5,12)$ & $C_2^6 \times C_4$ & 32 & 12 \\
$S_2$ & $(6,14)$ & $C_2^3 \times C_8$ & 8 & 4 \\
$S_3$ & $(4,6)$ & $C_2^3 \times C_8$ & 8 & 6 \\
$S_4$ & $(3,8)$ & $C_2 \times C_4^2$ & 8 & 4 \\
$S_5$ & $(2,6)$ & $C_2^6$ & 16 & 6 
\end{tabular}
\end{center}

We use Remark \ref{vector} to describe parametrised presentations for each
infinite sequence. In each case, we outline the relevant presentations for 
$S$ and exhibit a set of lists of vectors $\W = (\w_1, \ldots, \w_m)$ with
$\w_i \in \Z_2^d$ defining the infinite sequences. Note that $\w_i$ 
corresponds to the $i$th relation of $S$ in all cases. To shorten 
notation, we sometimes replace a $0$-vector $\w_i$ by a single dot and
we eliminate $\w_j, \ldots, \w_m$ if these vectors are all $0$-vectors.

\medskip
{\bf \boldmath $S_1$ with $(5,12)$.} Generators $g_1, \ldots, g_6, t_1, t_2$ 
and relations
\begin{center}
\begin{tabular}{L{14.5cm}}
  g_1^{2} = g_4,\;
  g_2^{2} = 1, \;
  g_2^{g_1} = g_2g_3, \;
  g_3^{2} = g_6, \;
  g_3^{g_1} = g_3g_5,\;
  g_3^{g_2} = g_3g_6t_2, \;
  g_4^2 = 1,\;
  g_4^{g_1} = g_4, \; 
  g_4^{g_2} = g_4g_5g_6t_1^{-1}, \;
  g_4^{g_3} = g_4g_6, \;
  g_5^2 = t_1t_2, \; 
  g_5^{g_1} = g_5g_6t_1^{-1}, \;
  g_5^{g_2} = g_5t_1^{-1}t_2^{-1}, \;
  g_5^{g_3} = g_5, \;
  g_5^{g_4} = g_5t_1^{-1}t_2^{-1}, \;
  g_6^2 = t_2^{-1},\; 
  g_6^{g_1} = g_6t_1t_2, \; 
  g_6^{g_2} = g_6t_2,\;
  g_6^{g_3} = g_6, \;
  g_6^{g_4} = g_6t_2, \;
  g_6^{g_5} = g_6,\;
  t_1^{g_1} = t_1t_2, \; 
  t_1^{g_2} = t_1^{-1}, \; 
  t_1^{g_3} = t_1, \; 
  t_1^{g_4} = t_1^{-1}, \; 
  t_1^{g_5} = t_1, \; 
  t_1^{g_6} = t_1, \;
  t_2^{g_1} = t_1^{-2}t_2^{-1}, \; 
  t_2^{g_2} = t_2^{-1}, \; 
  t_2^{g_3} = t_2, \; 
  t_2^{g_4} = t_2^{-1}, \; 
  t_2^{g_5} = t_2, \; 
  t_2^{g_6} = t_2, 
  t_2^{t_1} = t_2.
\end{tabular}
\end{center}

{\tiny
{\centering
\begin{tabular}{L{0.4cm}C{0.2cm}L{12cm}}
\W_1 &=& 2^{5}
(\cdot,(1,0),\cdot,\cdot,(0,0),(0,1),(0,0),\cdot,(0,0),(0,0),\cdot,(0,0)), \\
\W_2 &=& 2^{5}
(\cdot,(0,1),\cdot,\cdot,(0,0),(0,0),(0,0),\cdot,(0,0),(0,0),\cdot,(0,0)), \\
\W_3 &=& 2^{5}
(\cdot,(0,0),\cdot,\cdot,(0,1),(0,0),(0,0),\cdot,(0,1),(0,1),\cdot,(0,1)), \\
\W_4 &=& 2^{5}
(\cdot,(0,0),\cdot,\cdot,(0,0),(0,0),(0,1),\cdot,(0,0),(0,0),\cdot,(0,0)), \\
\W &\in&
\{0, \W_1, \W_2, \W_3, \W_4, \W_1+\W_3, \W_1+\W_4, \W_2+\W_3, \W_2+\W_4, 
\W_3+\W_4\}
\end{tabular}}}

\medskip
{\bf \boldmath $S_1$ with $(6,14)$.} Generators $g_1, \cdots, g_6, t_1, t_2$ 
and relations
\begin{center}
\begin{tabular}{L{14.5cm}}
  g_1^{2} = g_4,\;
  g_2^{2} = 1,\;
  g_2^{g_1} = g_2g_3,\;
  g_3^{2} = g_5^2g_6, \;
  g_3^{g_1} = g_3g_5,\;
  g_3^{g_2} = g_3g_5^2g_6t_1^{-1}t_2^{-1}, \;
  g_4^2 = 1,\;
  g_4^{g_1} = g_4, \;
  g_4^{g_2} = g_4g_5g_6t_1^{-1}t_2^{-1}, \; 
  g_4^{g_3} = g_4g_5^2g_6, \;
  g_5^4 = t_1^{-1}, \;
  g_5^{g_1} = g_5g_6t_1^{-1} t_2^{-1}, \; 
  g_5^{g_2} = g_5^3t_1, \;
  g_5^{g_3} = g_5, \;
  g_5^{g_4} = g_5^3t_1, \;
  g_6^2 = t_1^2t_2, \;
  g_6^{g_1} = g_5^2g_6 t_2,\;
  g_6^{g_2} = g_6t_1^{-2} t_2^{-1},\;
  g_6^{g_3} = g_6, \;
  g_6^{g_4} = g_6t_1^{-2}t_2^{-1}, \; 
  g_6^{g_5} = g_6,\;
  t_1^{g_1} = t_1t_2^2, \; 
  t_1^{g_2} = t_1^{-1}, \; 
  t_1^{g_3} = t_1, \; 
  t_1^{g_4} = t_1^{-1}, \; 
  t_1^{g_5} = t_1, \; 
  t_1^{g_6} = t_1, \;
  t_2^{g_1} = t_1^{-1}t_2^{-1}, \; 
  t_2^{g_2} = t_2^{-1}, \; 
  t_2^{g_3} = t_2, \; 
  t_2^{g_4} = t_2^{-1}, \; 
  t_2^{g_5} = t_2, \; 
  t_2^{g_6} = t_2, \;
  t_2^{t_1} = t_2.
\end{tabular}
\end{center}

{\tiny
{\centering
\begin{tabular}{L{0.4cm}C{0.2cm}L{12cm}}
\W_1 &=& 2^{6}
(\cdot,(1,0),\cdot,\cdot,(0,0),(0,0),(0,0),\cdot,(0,0),(0,0),\cdot,(0,0)), \\
\W_2 &=& 2^{6}
(\cdot,(0,1),\cdot,\cdot,(0,0),(1,0),(0,0),\cdot,(0,0),(0,0),\cdot,(0,0)), \\
\W_3 &=& 2^{6}
(\cdot,(0,0),\cdot,\cdot,(1,0),(0,0),(0,0),\cdot,(1,0),(1,0),\cdot,(1,0)), \\
\W_4 &=& 2^{6}
(\cdot,(0,0),\cdot,\cdot,(0,0),(0,0),(1,0),\cdot,(0,0),(0,0),\cdot,(0,0)), \\
\W &\in& \{0, \W_1, \W_2, \W_3, \W_4, \W_1+\W_3, \W_1+\W_4, \W_2+\W_4, \W_2+\W_3\}.
\end{tabular} } }
\medskip

{\bf \boldmath $S_2$ with $(5,12)$.} Generators $g_1, \ldots, g_6, t_1, t_2$ 
and relations
\begin{center}
\begin{tabular}{L{14.5cm}}
  g_1^{2} = g_4, \;
  g_2^{2} = 1, \;
  g_2^{g_1} = g_2g_3, \;
  g_3^{2} = 1, \;
  g_3^{g_1} = g_3g_5, \;
  g_3^{g_2} = g_3, \;
  g_4^2 = 1, \;
  g_4^{g_1} = g_4, \;
  g_4^{g_2} = g_4g_5t_1^{-1}, \;
  g_4^{g_3} = g_4g_6t_1^{-1}t_2^{-1}, \;
  g_5^2 = g_6t_1, \;
  g_5^{g_1} = g_5g_6, \;
  g_5^{g_2} = g_5t_1^{-1}, \;
  g_5^{g_3} = g_5t_1^{-1}, \;
  g_5^{g_4} = g_5t_1^{-1}, \;
  g_6^2 = t_2, \;
  g_6^{g_1} = g_6t_1^{-1} t_2^{-1}, \; 
  g_6^{g_2} = g_6t_1, \;
  g_6^{g_3} = g_6t_2^{-1}, \; 
  g_6^{g_4} = g_6t_2^{-1}, \;
  g_6^{g_5} = g_6, \;
  t_1^{g_1} = t_1t_2, \; 
  t_1^{g_2} = t_1^{-1}, \; 
  t_1^{g_3} = t_1^{-1}, \; 
  t_1^{g_4} = t_1^{-1}, \; 
  t_1^{g_5} = t_1, \; 
  t_1^{g_6} = t_1, \;
  t_2^{g_1} = t_1^{-2}t_2^{-1}, \; 
  t_2^{g_2} = t_1^2t_2, \; 
  t_2^{g_3} = t_2^{-1}, \; 
  t_2^{g_4} = t_2^{-1}, \; 
  t_2^{g_5} = t_2, \; 
  t_2^{g_6} = t_2, \;
  t_2^{t_1} = t_2.
\end{tabular}
\end{center}

{\tiny
{\centering
\begin{tabular}{L{0.4cm}C{0.2cm}L{12.9cm}}
\W_1 &=& 2^{5}
(\cdot,(0,1),\cdot,(0,0),\cdot,(0,0),(0,0),\cdot,(0,0),(0,0),(0,0),(0,0),(0,0),(0,0),(0,0),(0,0),(0,0),(0,0),(0,0),(0,0)), \\
\W_2 &=& 2^{5}
(\cdot,(0,0),\cdot,(1,0),\cdot,(1,0),(0,0),\cdot,(0,0),(0,0),(1,0),(1,1),(1,0),(1,1),(1,0),(0,1),(1,0),(1,1),(0,1),(0,1)), \\ 
\W_3 &=& 2^{5}
(\cdot,(0,0),\cdot,(0,1),\cdot,(0,1),(0,0),\cdot,(0,0),(0,0),(0,1),(0,1),(0,1),(0,1),(0,1),(0,0),(0,1),(0,1),(0,0),(0,0)), \\ 
\W_4 &=& 2^{5}
(\cdot,(0,0),\cdot,(0,0),\cdot,(0,0),(0,1),\cdot,(0,0),(0,0),(0,0),(0,0),(0,0),(0,0),(0,0),(0,0),(0,0),(0,0),(0,0),(0,0)), \\ 
\W_5 &=& 2^{5}
(\cdot,(0,0),\cdot,(0,0),\cdot,(0,0),(0,0),\cdot,(0,1),(0,1),(0,1),(0,0),(0,1),(0,1),(0,1),(0,0),(0,1),(0,1),(0,0),(0,0)), \\
\W &\in&
\{0, \W_1, \W_2, \W_3, \W_4, \W_1+\W_4, \W_1+\W_3, \W_1+\W_2, \W_1+\W_3+\W_4, \W_1+\W_2+\W_4,
\W_3+\W_4, \W_2+\W_4 \}.
\end{tabular} } }

\medskip
{\bf \boldmath $S_2$ with $(6,14)$.} Generators $g_1, \ldots, g_7, t_1, t_2$ 
and relations
\begin{center}
\begin{tabular}{L{14.5cm}}
  g_1^{2} = g_4, \;
  g_2^{2} = 1, \;
  g_2^{g_1} = g_2g_3, \;
  g_3^{2} = 1, \;
  g_3^{g_1} = g_3g_5, \;
  g_3^{g_2} = g_3, \;
  g_4^2 = 1, \;
  g_4^{g_1} = g_4, \;
  g_4^{g_2} = g_4g_5g_7, \;
  g_4^{g_3} = g_4g_6g_7t_2, \;
  g_5^2 = g_7t_1^{-1}, \;
  g_5^{g_1} = g_5g_6, \;
  g_5^{g_2} = g_5g_7, \;
  g_5^{g_3} = g_5g_7, \;
  g_5^{g_4} = g_5g_7, \;
  g_6^2 = g_8t_2^{-1}, \;
  g_6^{g_1} = g_6g_7 t_2, \;
  g_6^{g_2} = g_6g_7t_1^{-1}, \;
  g_6^{g_3} = g_6t_2, \;
  g_6^{g_4} = g_6t_2, \;
  g_6^{g_5} = g_6, \;
  g_7^2 = t_1, \;
  g_7^{g_1} = g_7t_2, \;
  g_7^{g_2} = g_7t_1^{-1}, \;
  g_7^{g_3} = g_7t_1^{-1}, \;
  g_7^{g_4} = g_7 t_1^{-1}, \;
  g_7^{g_5} = g_7, \;
  g_7^{g_6} = g_7, \;
  t_1^{g_1} = t_1t_2^2, \; 
  t_1^{g_2} = t_1^{-1}, \; 
  t_1^{g_3} = t_1^{-1}, \; 
  t_1^{g_4} = t_1^{-1}, \; 
  t_1^{g_5} = t_1, \; 
  t_1^{g_6} = t_1, \;
  t_2^{g_1} = t_1^{-1}t_2^{-1}, \; 
  t_2^{g_2} = t_1t_2, \; 
  t_2^{g_3} = t_2^{-1}, \; 
  t_2^{g_4} = t_2^{-1}, \; 
  t_2^{g_5} = t_2, \; 
  t_2^{g_6} = t_2, \;
  t_2^{t_1} = t_2.
\end{tabular}
\end{center}

{\tiny
{\centering
\begin{tabular}{L{0.4cm}C{0.2cm}L{13cm}}
\W_1 &=& 2^{6}
(\cdot,\cdot,\cdot,(1,0),\cdot,(1,0),(0,0),\cdot,\cdot,(0,0),(1,0),(1,0),(1,0),(1,0),(1,0),(0,0),(1,0),(1,0),(0,0),(0,0),\cdot,\cdot,(0,0)), \\
\W_2 &=& 2^{6}
(\cdot,\cdot,\cdot,(0,0),\cdot,(0,0),(1,0),\cdot,\cdot,(0,0),(0,0),(0,0),(0,0),(0,0),(0,0),(0,0),(0,0),(0,0),(0,0),(0,0),\cdot,\cdot,(0,0)), \\
\W_3 &=& 2^{6}
(\cdot,\cdot,\cdot,(0,0),\cdot,(0,0),(0,0),\cdot,\cdot,(1,0),(0,0),(0,0),(0,0),(0,0),(0,0),(1,0),(1,0),(0,0),(1,0),(1,0),\cdot,\cdot,(1,0)), \\
\W &\in& \{0, \W_1, \W_2, \W_1+\W_2 \}.
\end{tabular} } }

\medskip
{\bf \boldmath $S_3$ with $(4,6)$.} Generators $g_1, \ldots, g_5, t$ and 
relations
\begin{center}
\begin{tabular}{L{14.5cm}}
  g_1^{2} = g_4, \;
  g_2^{2} = 1, \;
  g_2^{g_1} = g_2g_3, \;
  g_3^{2} = g_5, \;
  g_3^{g_1} = g_3g_5t^{-1}, \;  
  g_3^{g_2} = g_3g_5t^{-1}, \;
  g_4^2 = 1, \;
  g_4^{g_1} = g_4, \;
  g_4^{g_2} = g_4, \;
  g_4^{g_3} = g_4, \;
  g_5^2 = t, \;
  g_5^{g_1} = g_5t^{-1}, \;
  g_5^{g_2} = g_5t^{-1}, \;  
  g_5^{g_3} = g_5, \;
  g_5^{g_4} = g_5, \;
  t^{g_1} = t^{-1}, \; 
  t^{g_2} = t^{-1}, \; 
  t^{g_3} = t, \; 
  t^{g_4} = t, \; 
  t^{g_5} = t.
\end{tabular}
\end{center}

{\tiny
{\centering
\begin{tabular}{L{0.4cm}C{0.2cm}L{12cm}}
\W_1 &=& 2^{5} ((0),(1),(0),(0),(0),(0),(0),(0),(0),(0),(0),(0),(0),(0),(0)), \\
\W_2 &=& 2^{5} ((0),(0),(0),(0),(0),(1),(0),(0),(1),(0),(1),(1),(1),(0),(0)), \\
\W_3 &=& 2^{5} ((0),(0),(0),(0),(0),(0),(1),(0),(0),(0),(0),(0),(0),(0),(0)), \\
\W &\in& \{0, \W_1, \W_2, \W_3, \W_1+\W_2+\W_3, \W_2+\W_3 \}.
\end{tabular} } }
\medskip

{\bf \boldmath $S_4$ with $(3,8)$.} Generators $g_1, \ldots, g_4, t$ and 
relations
\begin{center}
\begin{tabular}{L{14.5cm}}
  g_1^{2} = g_4, \;
  g_2^{2} = g_3, \;
  g_2^{g_1} = g_2g_3t^{-1}, \;
  g_3^{2} = t, \;
  g_3^{g_1} = g_3t^{-1}, \;  
  g_3^{g_2} = g_3, \;
  g_4^2 = 1, \;
  g_4^{g_1} = g_4, \;
  g_4^{g_2} = g_4, \;
  g_4^{g_3} = g_4, \;
  t^{g_1} = t^{-1}, \;  
  t^{g_2} = t, \;  
  t^{g_3} = t, \;  
  t^{g_4} = t. 
\end{tabular}
\end{center}

{\tiny
{\centering
\begin{tabular}{L{0.4cm}C{0.2cm}L{12cm}}
\W_1 &=& 2^{7} ((0),(0),(0),(0),(0),(0),(1),(0,(0),(0)), \\ 
\W_2 &=& 2^{6} ((0),(0),(0),(1),(1),(0),(-2),(0),(2),(0)), \\
\W &\in& \{0, \W_1, 3\W_2, 2\W_2 \}.
\end{tabular} } }

\medskip
{\bf \boldmath $S_5$ with $(2,6)$.} Generators $g_1, \ldots, g_3, t$ and 
relations
\begin{center}
\begin{tabular}{L{14.5cm}}
  g_1^{2} = 1, \;
  g_2^{2} = t, \;
  g_2^{g_1} = g_2t^{-1}, \;
  g_3^{2} = 1, \;
  g_3^{g_1} = g_3, \;
  g_3^{g_2} = g_3, \;
  t^{g_1} = t^{-1}, \;  
  t^{g_2} = t, \;  
  t^{g_3} = t. 
\end{tabular}
\end{center}

{\tiny 
{\centering 
\begin{tabular}{L{0.4cm}C{0.2cm}L{12cm}}
\W_1 &=& 2^{5} ((1),(0),(0),(0),(0),(0)), \\
\W_2 &=& 2^{5} ((0),(0),(1),(0),(0),(0)), \\
\W_3 &=& 2^{5} ((0),(0),(0),(0),(1),(0)), \\
\W_4 &=& 2^{5} ((0),(0),(0),(0),(0),(1)), \\
\W &\in& \{0, \W_1, \W_2, \W_3, \W_4, \W_1+\W_4\}.
\end{tabular} } }

\bibliographystyle{abbrv}
\bibliography{paper-ccfams}

\begin{thebibliography}{10}

\bibitem{Bla58}
N.~Blackburn.
\newblock On a special class of $p$-groups.
\newblock {\em Acta Math.}, 100:45--92, 1958.

\bibitem{DEF08}
H.~Dietrich, B.~Eick, and D.~Feichtenschlager.
\newblock Investigating $p$-groups by coclass with {\sc gap}.
\newblock In {\em {C}omputational group theory and the theory of groups},
  volume 470 of {\em Contemp. Math.}, pages 45--61. Amer. Math. Soc.,
  Providence, RI, 2008.

\bibitem{DuS01}
M.~du~Sautoy.
\newblock Counting $p$-groups and nilpotent groups.
\newblock {\em Inst. Hautes Etudes Sci. Publ. Math.}, 92:63--112, 2001.

\bibitem{Eic06b}
B.~Eick.
\newblock Automorphism groups of 2-groups.
\newblock {\em J. Algebra}, 300:91--101, 2006.

\bibitem{Eic07b}
B.~Eick.
\newblock Schur multiplicator of finite $p$-groups with fixed coclass.
\newblock {\em Israel J. Math.}, 166:157--166, 2008.

\bibitem{EFe10b}
B.~Eick and D.~Feichtenschlager.
\newblock Computation of low-dimensional (co)homology groups for infinite
  sequences of p-groups with fixed coclass.
\newblock {\em In preparation}.

\bibitem{ELG08}
B.~Eick and C.~R. Leedham-Green.
\newblock On the classification of prime-power groups by coclass.
\newblock {\em Bull. Lond. Math. Soc.}, 40(2), 2008.

\bibitem{HEO05}
D.~Holt, B.~Eick, and E.~A. O'Brien.
\newblock {\em Handbook of computional group theory}.
\newblock Chapman \& Hall, 2005.

\bibitem{LGM02}
C.~R. Leedham-Green and S.~McKay.
\newblock {\em The structure of groups of prime power order}.
\newblock London Mathematical Society Monographs New Series. Oxford Science
  Publication, 2002.

\bibitem{LNe80}
C.~R. Leedham-Green and M.~F. Newman.
\newblock Space groups and groups of prime-power order {I}.
\newblock {\em Archiv der Mathematik}, 35:193--202, 1980.

\bibitem{NOB99}
M.~F. Newman and E.~A. O'Brien.
\newblock Classifying 2-groups by coclass.
\newblock {\em Trans. Amer. Math. Soc.}, 351:131--169, 1999.

\end{thebibliography}

\end{document}